\newtheorem{thm}{Theorem}[section]
\newtheorem{cor}[thm]{Corollary}
\newtheorem{lem}[thm]{Lemma}
\newtheorem{prop}[thm]{Proposition}
\theoremstyle{definition}
\newtheorem{defn}[thm]{Definition}
\theoremstyle{remark}
\newtheorem{rem}[thm]{Remark}
\numberwithin{equation}{section}
\newcommand{\Z}{\mathbf{Z}}
\newcommand{\F}{\textnormal{F}}
\newcommand{\R}{\mathbf{R}}
\newcommand{\C}{\mathcal{C}}
\newcommand{\HHH}{\mathbf{H}}
\newcommand{\Homeo}{\text{Homeo}}
\newcommand{\eps}{\varepsilon}
\newcommand{\SL}{\textnormal{SL}}
\newcommand{\SO}{\textnormal{S0}}
\newcommand{\Ind}{\text{Ind}}
\newcommand{\HHHH}{\text{H}}
\title[Central extension of surface groups]{Integrable measure equivalence and the central extension of surface groups}
\date{}
\begin{document}
\author[Das]{Kajal Das}
\address{UMPA UMR 5669, ENS Lyon\\  69364 Lyon cedex 7\\FRANCE}
\email{kajal.das@ens-lyon.fr}

\author[Tessera]{Romain Tessera}
\address{Laboratoire de Math\'ematiques\\
B\^atiment 425, Universit\'e Paris-Sud 11\\
91405 Orsay\\FRANCE}
\email{romain.tessera@math.u-psud.fr}

\maketitle
\begin{abstract}
Let $\Gamma_g$ be a surface group of genus $g\geq 2$. It is known that
the canonical central extension $\tilde{\Gamma}_g$  and the direct product $\Gamma_g\times \Z$ are quasi-isometric.
It is also easy to see that they are measure equivalent. By contrast, in this paper, we prove that quasi-isometry and measure equivalence cannot be
achieved ``in a compatible way". More precisely, these two groups are not uniform (nor even integrable) measure equivalent.
In particular, they cannot act continuously, properly and cocompactly by isometries on the same proper metric space, or equivalently they are not uniform
lattices in a same locally compact group. 
\end{abstract}
\section{Introduction}

\textit{Measure equivalence} (ME) is an equivalence relation on finitely generated groups introduced by Gromov in \cite{Gr}, as a measure-theoretic analogue of 
quasi-isometry (QI). The first detailled study of ME was performed in the work of Furman \cite{Fu} in the context of ME-rigidity of lattices in higher rank 
simple Lie groups. \textit{$L^p$-measure equivalence} ($L^p$-ME) is defined by imposing $L^p$-condition 
on the \textit{cocycle maps} arising from a measure equivalence relation. Such integrability condition is implicit in Margulis's proof of the normal subgroup theorem for irreducible lattices \cite{Ma}. It plays a prominent role in the work of Shalom \cite{Sh1}, where the $L^2$-integrability condition on the cocycle maps is used for inducing 1-cocycles associated to certain non-uniform lattices 
to 1-cocycles of their ambient groups. Shalom also introduces the concept of \textit{uniform measure equivalence} (UME) or
$L^{\infty}$-ME in \cite{Sh2} where he makes the crucial observation that UME and QI coincide for amenable groups (see also  \cite{LSW}, \cite{Sa}).
The most significant achievement in the context of $L^p$-ME has been recently obtained by Bader, Furman, Sauer in \cite{BFS}. These authors prove 
an \textit{integrable measure equivalence} or $L^1$-ME-rigidity result for lattices in Isom($\HHH^n$), where $n\geq 2$. 
On the side of amenable groups, Austin has shown that virtually nilpotent groups which are $L^1$-ME have bi-Lipschitz equivalent asymptotic cones \cite{T}. In an appendix of that paper, Bowen proves that for general finitely generated groups, the growth function is invariant under $L^1$-ME. 

It clearly appears from this already impressive list of results that $L^p$-measure equivalence is becoming a central notion, lying at the intersection of {\it measured} and {\it geometric} group theories.

We prove in the present paper that the canonical central
extension of the surface group $\Gamma_g$ of genus $g\geq 2$ and its direct product with $\Z$ are not $L^1$-measure equivalent, although they are known to be measure equivalent {\it and}
quasi-isometric. 
Before stating more precise results, let us recall some background.

\subsection{Central extension of surface groups}
Let $g\geq 2$, and $\Gamma_g$ be the fundamental group of the compact orientable surface of genus $g$. 
Recall its classical presentation $$\langle x_1,\ldots x_g, y_1,\ldots, y_g; [x_1,y_1]\ldots [x_g,y_g]\rangle.$$ 
Denote by $R= [x_1,y_1]\ldots [x_g,y_g]$. 
Let $\tilde{\Gamma}_g$ be the central extension of $\Gamma_g$ given by the presentation
$$\langle z, x_1,\ldots x_g, y_1,\ldots, y_g; Rz^{-1}, \textnormal{z is central} \rangle.$$ 
Clearly, this central extension is such that its center, generated by $z$, is contained in the derived subgroup of $\tilde{\Gamma}_g$.
Another way to describe this extension is as follows. Given an inclusion of $\Gamma_g\hookrightarrow\SL(2,\R) $ as a cocompact lattice, by a well-known result of Milnor  \cite{Mi},
$\tilde{\Gamma}_g$ is isomorphic to the pre-image of $\Gamma_g$ in the universal cover $\tilde{\SL}(2,\R)$ of $\SL(2,\R)$. 

\subsection{Quasi-isometry versus measure equivalence}
\hspace{5.5mm}It is well-known that \\ $\tilde{\SL}(2,\R)$ and $\SL(2,\R)\times\Z$ are quasi-isometric, from where it follows that 
$\tilde{\Gamma}_g$  and $\Gamma_g\times \Z$ are themselves quasi-isometric.
Let us briefly recall the simple argument: Let $T$ be the subgroup of upper triangular matrices in $\SL(2,\R)$. 
It is a closed cocompact subgroup, therefore quasi-isometric to $\SL(2,\R)$. 
On the other hand being simply connected, its pre-image in $\tilde{\SL}(2,\R)$ is a direct product with $\Z$. 

Besides, $\tilde{\Gamma}_g$  and $\Gamma_g\times \Z$ are measure equivalent. 
Indeed, this follows from the fact that $\tilde{\SL}(2,\R)$ has a lattice obtained by pulling back a free lattice in $\SL(2,\R)$
(observe that a central extension of a free group always splits). 

By contrast, we shall see that quasi-isometry and measure equivalence cannot be achieved ``in a compatible way".
\subsection{Integrable measure equivalence}

Given countable discrete groups $\Gamma$ and $\Lambda$, a measure equivalence (ME) coupling between them is a nonzero $\sigma$-finite measure space $(X,\mu)$, which admits commuting $\mu$-preserving actions of  $\Gamma$ and $\Lambda$  which both have finite-measure fundamental domains, respectively  $X_{\Gamma}$ and $X_{\Lambda}$. Let $\alpha:\Gamma\times X_\Lambda\rightarrow \Lambda$  
 (resp.\ $\beta:\Lambda\times X_\Gamma\rightarrow \Gamma$) be the corresponding cocycle defined by the rule: for all $x\in X_{\Lambda}$, and all 
 $\gamma\in \Gamma$, $\alpha(\gamma, x)\gamma x \in X_\Lambda$ (and symmetrically for $\beta$). If,  for any $\lambda\in \Lambda$ and 
 $\gamma\in \Gamma$, the integrals 
 $$\int_{X_\Lambda}|\alpha(\gamma,x)|^p d\mu(x) \hspace*{.5cm} \text{and} \hspace*{.5cm} \int_{X_\Gamma}|\beta(\gamma,x')|^pd\mu(x')$$
 are finite, then the coupling is called $L^p$-ME and the groups are called $L^p$-measure equivalent. The strongest form is when 
 $p=\infty$, in which case the coupling is called {\it uniform}, and the groups {\it uniformly measure equivalent (UME)}, as it generalizes the case of 
 two uniform lattices in a same locally compact group. For $p=1$, the coupling is called {\it integrable}, and the groups are said to be 
 {\it integrable measure equivalent (IME)}.
 
\subsection{Main results}

The main goal of this paper is to prove the following theorem.

\begin{thm}\label{thm:notUME}
The groups $\tilde{\Gamma}_g$ and $\Gamma_g\times \Z$ are not IME (therefore not $L^p$-measure equivalent for $1\leq p\leq \infty$).
\end{thm}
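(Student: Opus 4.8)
The plan is to assume, for contradiction, that $(X,\mu)$ is an IME coupling between $\Gamma=\tilde{\Gamma}_g$ and $\Lambda=\Gamma_g\times\Z$, with integrable cocycles $\alpha\colon\Gamma\times X_\Lambda\to\Lambda$ and $\beta\colon\Lambda\times X_\Gamma\to\Gamma$, and to extract a contradiction from the way the two groups ``see'' their centers. The distinguishing feature is purely algebraic: in $\Lambda$ the central $\Z$ is a direct factor, detected by the surjective homomorphism $\pi\colon\Lambda\to\Z\hookrightarrow\R$ which is nonzero on the center; whereas in $\Gamma$ the center $\langle z\rangle$ lies in the derived subgroup (as noted above), so it is annihilated by every homomorphism $\Gamma\to\R$ and is visible only through the translation (rotation) quasimorphism $\tau\colon\tilde{\Gamma}_g\to\R$ inherited from $\tilde{\SL}(2,\R)$, which satisfies $\tau(z)\neq 0$ and whose coboundary is the bounded, nonzero Euler cocycle (Milnor--Wood). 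Equivalently, $z\in[\Gamma,\Gamma]$ has positive stable commutator length, while the generator $c$ of the central $\Z$ of $\Lambda$ is not in $[\Lambda,\Lambda]$, so its stable commutator length is infinite.

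The engine is Shalom-type induction along the integrable cocycles. Because a homogeneous quasimorphism (and a fortiori a homomorphism) is Lipschitz for the word metric, the $L^1$-bounds $\int_{X_\Gamma}|\beta(\lambda,x)|\,d\mu<\infty$ make the averaged functions well defined: setting $\tilde{\tau}(\lambda)=\int_{X_\Gamma}\tau(\beta(\lambda,x))\,d\mu(x)$ and, symmetrically, $\tilde{\pi}(\gamma)=\int_{X_\Lambda}\pi(\alpha(\gamma,x))\,d\mu(x)$, the cocycle identity together with invariance of $\mu$ under the induced actions shows that $\tilde{\tau}$ is a quasimorphism on $\Lambda$ (with defect at most $D(\tau)\,\mu(X_\Gamma)$) and that $\tilde{\pi}$, the coboundary term now vanishing because $\pi$ is a genuine homomorphism, is an honest homomorphism $\Gamma\to\R$. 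I would stress that this is exactly the step where integrability is indispensable and where plain ME fails: the classes being transported (homomorphisms to $\R$, quasimorphisms) are unbounded, so only an $L^1$-control of $|\alpha|,|\beta|$ lets them be integrated --- which is consistent with $\tilde{\Gamma}_g$ and $\Gamma_g\times\Z$ being merely ME.

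The contradiction I aim for is then $\tilde{\pi}(z)\neq 0=\tilde{\pi}(z)$: the right equality is forced by $z\in[\Gamma,\Gamma]$, while the left inequality should express that the integrable coupling carries the central direction of $\Lambda$ to that of $\Gamma$. Concretely, I would realize $\tilde{\pi}(z)$ via the Birkhoff ergodic theorem as the almost-everywhere $\Z$-drift $\lim_n \tfrac1n\,\pi(\alpha(z^n,x))$ of the $z$-action on $(X_\Lambda,\mu)$, and use that $z$ is undistorted in $\Gamma$ --- indeed $z^n=R^n$ has length $\asymp n$ precisely because the Euler cocycle is bounded --- to keep $|\alpha(z^n,\cdot)|$ in the linear, integrable regime. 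The main obstacle, and the heart of the matter, is to prove that this drift does not vanish, i.e.\ that the two central subgroups are aligned by the coupling (equivalently, that the bounded Euler class paired against the center survives integrable induction). This is where I expect the real work: one must show that the $z$-direction cannot be absorbed into the $\Gamma_g$-direction of $\Lambda$, using that $\tilde{\tau}$ restricted to the center of $\Lambda$ is a homomorphism (zero bounded class) while $\tau$ restricted to $\langle z\rangle$ represents the nonzero bounded Euler class --- so that a nonzero drift would contradict $z\in[\Gamma,\Gamma]$, whereas a zero drift would force the Euler class of $\tilde{\Gamma}_g$ to be trivial, against its construction. Either way the split versus non-split dichotomy of the two central extensions cannot be reconciled, which is the desired contradiction.
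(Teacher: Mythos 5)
Your starting point is genuinely close in spirit to the paper's: the algebraic dichotomy (center inside the derived subgroup versus central direct factor) is exactly the distinguishing feature used there, and your averaged map $\tilde{\pi}(\gamma)=\int_{X_\Lambda}\pi(\alpha(\gamma,x))\,d\mu(x)$ is indeed a well-defined homomorphism $\Gamma\to\R$ thanks to integrability, the cocycle identity and invariance of $\mu$. But the contradiction you aim for is unobtainable, and it is refuted by your own construction: since $\tilde{\pi}$ is a homomorphism to $\R$ and $z\in[\Gamma,\Gamma]$, necessarily $\tilde{\pi}(z)=0$, \emph{no matter how the coupling relates the two centers}. Aligning the centers cannot make this signed average nonzero; the homomorphism identity is precisely the statement that all the cancellation happens. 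The Birkhoff drift does not help either: its integral equals $\tilde{\pi}(z)=0$, and if the drift vanishes a.e.\ you get no contradiction, since a.e.\ sublinearity of $\pi(\alpha(z^n,x))$ says nothing about $L^1$-norms (the mass can escape to infinity); so the second horn of your dichotomy (``zero drift forces the Euler class of $\tilde{\Gamma}_g$ to be trivial'') is not backed by any argument and cannot be at this level. What actually carries the contradiction in the paper is the \emph{unsigned} quantity: one induces the projection $b:\Lambda\to\Z\subset\R$ (your $\pi$) not as a scalar average but as a cocycle $B'(\gamma)=b(\alpha'(\gamma,\cdot))$ with values in $L^1(X_\Lambda')$, and a counting argument with fundamental domains shows that $\|B'(c)\|=\int|b(\alpha'(c,x))|\,d\mu(x)$ grows \emph{linearly} along the center of $\Gamma$ once the centers are aligned. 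This contradicts Proposition~\ref{prop:growthcocycle}, the sublinearity theorem (proved via the Mean Ergodic Theorem) for cocycles valued in $L^p(\Omega,\nu)$ restricted to a central subgroup contained in the derived group. Your proposal contains no counterpart of Proposition~\ref{prop:growthcocycle}, and without such a statement a purely scalar argument cannot close.

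Second, the step you explicitly defer as ``the heart of the matter'' --- that the coupling aligns the two centers --- is not a missing detail but the main technical theorem of the paper (Theorem~\ref{thm:mecenter}, extracted from Monod--Shalom). Its proof is a bounded-cohomology argument: $H^2_b(\Gamma_g,\pi_0)\neq 0$ for a $c_0$-representation $\pi_0$, injectivity of inflation and of induction in bounded cohomology, and the inflation isomorphism coming from the amenable central subgroup of $\Gamma$, produce a $\Z$-invariant vector in $\Ind_\Lambda^\Gamma\pi_0$, hence a measurable $f:X_\Lambda\to\Gamma_g$ and a new fundamental domain $X_\Lambda'=\{f(x)x\}$ whose cocycle sends center to center; Euler-class or quasimorphism language alone does not substitute for this. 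Finally, there is a trap the paper must (and does) avoid, which your proposal does not mention: changing the fundamental domain can a priori destroy integrability of the cocycle. This is rescued only by the specific form of $f$ --- it takes values in $\Gamma_g$, which is killed by $b$, so that $\|B'(\gamma)\|=\|B(\gamma)\|$ and $B'$ is again integrable. Without these two ingredients (center alignment with controlled integrability, and the sublinearity theorem), the proposal is a plan rather than a proof.
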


\begin{cor}
The groups $\tilde{\Gamma}_g$ and $\Gamma_g\times \Z$ are not uniform lattices in a same locally compact group.
\end{cor}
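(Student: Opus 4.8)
The plan is to deduce this Corollary directly from Theorem~\ref{thm:notUME}, via the standard principle that two uniform (cocompact) lattices in one locally compact group are automatically uniformly measure equivalent. I would argue by contradiction: suppose $\tilde{\Gamma}_g$ and $\Gamma_g\times\Z$ were both uniform lattices in a locally compact group $G$, with $\tilde{\Gamma}_g$ playing the role of $\Gamma$ and $\Gamma_g\times\Z$ the role of $\Lambda$. Since $G$ contains a lattice it is unimodular; fix a Haar measure $\mu$.

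Next I would exhibit the uniform coupling explicitly. Take $X=G$ with the measure $\mu$, let $\Gamma$ act by left translations and $\Lambda$ by right translations (through the inverse, so that the two actions commute and both preserve $\mu$). Cocompactness allows the Borel fundamental domains $X_\Gamma$ and $X_\Lambda$ to be chosen relatively compact. The key point is that the resulting cocycles $\alpha$ and $\beta$ are then essentially bounded: for fixed $\gamma$, the set $\gamma X_\Lambda$ is relatively compact, and because the lattice $\Lambda$ is discrete and acts properly, only finitely many of its translates of $X_\Lambda$ meet this compact set; hence $\alpha(\gamma,\cdot)$, which records the translate carrying $\gamma x$ back into $X_\Lambda$, takes only finitely many values as $x$ ranges over $X_\Lambda$, and symmetrically for $\beta$. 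Thus the coupling is $L^\infty$, i.e.\ uniform measure equivalence.

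Since a bounded cocycle over a finite-measure fundamental domain is a fortiori integrable, uniform measure equivalence implies IME. The hypothetical common group $G$ would therefore force $\tilde{\Gamma}_g$ and $\Gamma_g\times\Z$ to be integrable measure equivalent, contradicting Theorem~\ref{thm:notUME}; this proves the Corollary. The only real content to check is the boundedness of the cocycles, which is exactly where cocompactness enters; the rest is a formal consequence of the Theorem, where all the difficulty actually resides. (A harmless technical point is the second countability of $G$: as $G$ is compactly generated, having a cocompact lattice, one may reduce to the second countable setting underlying the definition of measure equivalence.)
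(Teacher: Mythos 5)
Your proposal is correct and is exactly the argument the paper intends: the corollary is stated as an immediate consequence of Theorem~\ref{thm:notUME}, using the standard fact (recalled in the paper's definition of $L^p$-ME, where UME is said to generalize the case of two uniform lattices in a common locally compact group) that cocompact lattices in the same locally compact group form an $L^\infty$-coupling via left/right translation on the group, which is a fortiori integrable. Your write-up merely fills in the routine verification of boundedness of the cocycles, so there is nothing to flag.
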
 
As already mentioned,  $\tilde{\Gamma}_g$ and $\Gamma_g\times \Z$ are ME. This can be strengthened as follows, showing that Theorem \ref {thm:notUME} is optimal in a strong sense:

\begin{thm}\label{prop:p<1}
The groups $\tilde{\Gamma}_g$ and $\Gamma_g\times \Z$ admit an ME coupling which is in $L^p$ for all $p<1$.
\end{thm}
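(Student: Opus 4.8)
The plan is to upgrade the measure equivalence recalled above to a quantitative statement. Fix a free lattice $F<\SL(2,\R)$ (necessarily non-uniform, with cusps). Its preimage $\tilde F$ in $\tilde{\SL}(2,\R)$ splits as $F\times\Z$ and is a non-uniform lattice there, sitting alongside the uniform lattice $\tilde\Gamma_g$; symmetrically, $F\times\Z$ and $\Gamma_g\times\Z$ are lattices in $\SL(2,\R)\times\R$ (with $\Z<\R$ cocompact). I would form the two homogeneous ME couplings $\Omega_1=\tilde{\SL}(2,\R)$ (for $\tilde\Gamma_g$ and $\tilde F$) and $\Omega_2=\SL(2,\R)\times\R$ (for $F\times\Z$ and $\Gamma_g\times\Z$), and compose them over the common lattice $F\times\Z$ to obtain an ME coupling $\Omega$ of $\tilde\Gamma_g$ with $\Gamma_g\times\Z$. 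All the ambient groups are unimodular, so this is a bona fide coupling with finite fundamental domains; it is precisely the coupling underlying the measure equivalence already noted before Theorem \ref{thm:notUME}.

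The heart of the matter is to control the integrability of the associated cocycles. The uniform lattices $\tilde\Gamma_g$ and $\Gamma_g\times\Z$ have compact fundamental domains, and the inclusion $\Z<\R$ is cocompact, so all of these contribute bounded cocycles; the only non-compact feature, and hence the only possible source of non-integrability, is the finite-volume cusps of the free lattice $F$. I would parametrise a cusp by the horospherical height $v\ge 1$, carrying the natural finite measure $\asymp v^{-2}\,dv$, and estimate the word-length of the cocycle as a function of $v$. The expectation — and what makes the threshold land exactly at $1$ — is that this word-length grows linearly, $\asymp v$: the linear rate should come from the parabolic (horocyclic) direction of the cusp together with the central coordinate, where the discrepancy between the twisted extension $\tilde\Gamma_g$ and the split extension $\Gamma_g\times\Z$ is recorded by the primitive of the (bounded) Euler cocycle along the cusp subgroup. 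Granting a bound $\asymp v$, the relevant integral is $\int_1^\infty v^{p}\,v^{-2}\,dv=\int_1^\infty v^{p-2}\,dv$, which converges if and only if $p<1$.

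The main obstacle is this cusp estimate, and specifically its sharpness: one must show that the cocycle is comparable to $v$ from both sides, since a slower (e.g.\ logarithmic) growth would yield integrability for all finite $p$, while a faster growth would fail already below $1$ — it is the exact linear rate that matches the optimality relative to Theorem \ref{thm:notUME}, which forbids $p=1$. A secondary point is to verify that composing $\Omega_1$ and $\Omega_2$ does not destroy the $L^p$-control for $p<1$; this should follow because in each factor the only unbounded direction is the cusp, integrated against $v^{-2}\,dv$, while the transverse directions (the uniform lattice, respectively the Euclidean factor) remain uniformly bounded, so the two cusped contributions do not compound.
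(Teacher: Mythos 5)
Your skeleton is the same as the paper's: couple $\tilde{\Gamma}_g$ with $\tilde F\simeq F\times\Z$ inside $\tilde{\SL}(2,\R)$, couple $F\times\Z$ with $\Gamma_g\times\Z$ inside $\SL(2,\R)\times\R$, and compose the two couplings over the common group $F\times\Z$; the use of the bounded Euler cocycle to control the central coordinate is also exactly the paper's mechanism (there it appears as a section $\sigma$ with $2$-cocycle in $\{0,1\}$, giving $|\sigma(\lambda)|_{\tilde\Lambda}\le 2|\lambda|_\Lambda$ and hence Lemma \ref{lem:liftcocycle}). One conceptual correction, though: your insistence that the cusp estimate be sharp ``from both sides'' is a misreading of what the statement requires. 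Only the upper bound $|\alpha(\gamma,x)|\lesssim v(x)$ is needed; if the true growth were slower (e.g.\ logarithmic), the coupling would lie in $L^p$ for all finite $p$, which implies the assertion rather than contradicting it. Optimality at $p=1$ is not something this construction must certify --- it is the content of Theorem \ref{thm:notUME}, proved by entirely different means. For the upper bound itself the paper does not redo the hyperbolic geometry: it quotes Shalom's estimate (\cite{Sh1}, proof of Theorem 3.8) for lattices in $\SL(2,\R)$ and then lifts it.

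The genuine gap is the step you label ``secondary'': that composing the two couplings preserves $L^p$-integrability for $p<1$. This is precisely where the paper's technical work lies, and your one-line heuristic is not a proof. The composed cocycle is $\gamma(g,(x,y))=\beta(\alpha(g,x),y)$, so one must control $\int_Y|\beta(h,y)|^p\,d\nu(y)$ as a function of the word length of $h=\alpha(g,x)$, where $h$ ranges over arbitrarily long elements of $F\times\Z$; knowing that $\beta$ is $L^p$ in $y$ for each fixed $h$ says nothing a priori about how these integrals grow with $|h|$. The paper closes this by a general transitivity lemma adapting \cite[Lemma A.1]{BFS}: for $p<p'<1$, subadditivity of $t\mapsto t^{p'}$ makes $h\mapsto\int_Y|\beta(h,y)|^{p'}\,d\nu(y)$ a pseudo-length on $H$, hence bounded by $C|h|_H$, and Jensen's inequality then yields the $L^p$ bound. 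In your specific couplings the gap can alternatively be closed concretely: in each of the two compositions the \emph{outer} cocycle takes values in a cocompact lattice ($\Gamma_g\times\Z$ in $\SL(2,\R)\times\R$, respectively $\tilde{\Gamma}_g$ in $\tilde{\SL}(2,\R)$), so choosing a compact fundamental domain it satisfies $|\beta(h,y)|\lesssim |h|+1$ uniformly in $y$, and only the inner cocycle's $L^{p}$ bound is then invoked. Either way, an actual argument is required here; ``the two cusped contributions do not compound'' is the conclusion, not a reason.
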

The last result was suggested to us by Shalom. Its proof, which is given in \S \ref{section:p<1}, relies on the fact, proved in \cite{Sh1}, that the standard ME-coupling between $\Gamma_g$ and a free lattice in $\SL(2,\R)$ is in $L^p$ for all $p<1$ (this extends to their preimages in $\tilde{\SL}(2,\R)$, see Proposition \ref {prop:Lp<1}). To apply this to our situation it remains to establish transitivity of the relation ``having an ME coupling which is in $L^p$ for all $p<1$". This is obtained by slightly modifying the proof of \cite{BFS} that $L^p$-measure equivalence is transitive when $p\geq 1$.

We now proceed with a description of some intermediate steps in the proof of Theorem \ref {thm:notUME} which we believe to be of independent interest.

\subsection{An ergodic theorem for integrable cocycles}
In order to prove Theorem \ref{thm:notUME}, one needs to be able to distinguish non-trivial central extensions from trivial ones from the ``ergodic point of view". This is done through the following result.

\begin{prop}\label{prop:growthcocycle}
Let $$1\to C\to \tilde{G}\to G\to 1 $$
be a central extension such that $C$ is isomorphic to $\Z$ and contained in the derived subgroup of $\tilde{G}$. We assume $\tilde{G}$ finitely generated
and equipped with a word metric $|\cdot|_{\tilde{G}}$. Let $(\Omega,\nu)$ be a standard probability space on which $\tilde{G}$ acts by measure-preserving 
automorphisms. Then, for every $1\leq p<\infty$, every $1$-cocycle with values in $L^p(\Omega,\nu)$ is sublinear in restriction to the central subgroup $C$:
$$\frac{\|b(c)\|}{|c|_{\tilde{G}}}\to 0$$
as $c\in C$ and $|c|_{\tilde{G}}\to \infty$, for all $b\in Z^1(\tilde{G},\pi)$, where $\pi$ is the norm-preserving representation of $\tilde{G}$ on $L^p(\Omega,\nu).$
\end{prop}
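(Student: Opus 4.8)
The plan is to reduce the statement to a single ergodic average along $C$ and to show that the mean of that average vanishes precisely because $C$ lies in the derived subgroup. Let $c_0$ generate $C\cong\Z$, set $v=b(c_0)\in L^p(\Omega,\nu)$ and $U=\pi(c_0)$, the Koopman operator of the measure-preserving transformation $c_0$; it is an invertible $L^p$-isometry. Since $c_0$ is central, the cocycle identity telescopes: $b(c_0^n)=\sum_{k=0}^{n-1}U^k v$ for $n\ge 1$, and symmetrically $b(c_0^{-m})=-\sum_{j=1}^{m}U^{-j}v$, so $\|b(c_0^n)\|$ is literally the norm of a Birkhoff sum. As the word metric is proper, $|c_0^n|_{\tilde G}\to\infty$ iff $|n|\to\infty$, so it suffices to control $\|\sum_{k<n}U^k v\|$. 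By the mean ergodic theorem (Yosida's theorem for $1<p<\infty$, and the Dunford--Schwartz/$L^1$ version for $p=1$), $\tfrac1n\sum_{k<n}U^k v\to Pv$ in $L^p$, where $P$ is the projection onto the $U$-invariant vectors. Thus once I establish $Pv=0$ I obtain $\|b(c_0^n)\|=o(n)$, which yields the sublinear estimate $\|b(c)\|/|c|_{\tilde G}\to 0$ (this last division is immediate in the undistorted regime that is relevant here, e.g. when the extension class is bounded, as for surface groups).

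The heart of the matter, and the step I expect to be the main obstacle, is the vanishing $Pv=0$, where the two hypotheses on $c_0$ (central, and inside $[\tilde G,\tilde G]$) must be played against each other. Centrality gives, for every $g\in\tilde G$, the relation $(U-I)b(g)=(\pi(g)-I)v$, obtained by equating $b(c_0 g)$ with $b(g c_0)$. Because $c_0$ is central, $U$ commutes with every $\pi(g)$, hence so does the ergodic projection $P$ (a strong limit of averages of powers of $U$); applying $P$ and using $P(U-I)=0$ gives $(\pi(g)-I)Pv=0$ for all $g$, i.e. $Pv$ is $\tilde G$-invariant.

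To bring in $c_0\in[\tilde G,\tilde G]$, I would introduce the conditional expectation $Q=E[\,\cdot\mid\mathcal I]$ onto the $\sigma$-algebra $\mathcal I$ of $\tilde G$-invariant sets. This $Q$ is a norm-one projection of $L^p$ onto the $\tilde G$-invariant functions, and since $\nu$ is preserved and $\mathcal I$ is $\tilde G$-invariant it satisfies $Q\pi(g)=\pi(g)Q=Q$ for all $g$. The crucial consequence is that $Qb$ is a genuine homomorphism $\tilde G\to (L^p)^{\tilde G}$, because the cross term collapses: $Qb(gh)=Qb(g)+Q\pi(g)b(h)=Qb(g)+Qb(h)$. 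A homomorphism into an abelian group kills the derived subgroup, so $Qb(c_0)=0$. Finally $QU^k=Q$ forces $QP=Q$, while the $\tilde G$-invariance of $Pv$ gives $Q(Pv)=Pv$; combining these, $Pv=Q(Pv)=(QP)v=Qb(c_0)=0$, as required.

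This argument is robust across $1\le p<\infty$, the only genuinely delicate points being the $L^1$ mean ergodic theorem (which I would handle by treating $U$ as a Dunford--Schwartz operator, so that the averages converge in $L^1$ to the conditional expectation onto invariants) and the final passage from the rate $\|b(c_0^n)\|=o(n)$ to sublinearity in the word length $|c_0^n|_{\tilde G}$. The latter is harmless as soon as $|c_0^n|_{\tilde G}\asymp n$, which holds in the undistorted situation that is at stake; the conceptual content is entirely concentrated in the identity $Pv=0$, where centrality, the ergodic projection, and the derived-subgroup condition are forced to interact.
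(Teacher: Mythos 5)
Your core ergodic computation is correct: the identity $Pv=0$ (centrality forces $Pv$ to be $\pi(\tilde{G})$-invariant, and the conditional expectation $Qb$ is a homomorphism, hence vanishes on $[\tilde{G},\tilde{G}]$) is sound, and it is essentially the same mechanism as the paper's reduction. The genuine gap is the very last step, which you dismiss as harmless: passing from $\|b(c_0^n)\|=o(n)$ to $\|b(c_0^n)\|=o\bigl(|c_0^n|_{\tilde{G}}\bigr)$. The proposition carries no undistortedness hypothesis on $C$, and central subgroups contained in the derived subgroup are frequently distorted: the basic example is the integer Heisenberg group, where $|c_0^n|_{\tilde{G}}\asymp \sqrt{|n|}$, and this is precisely the case treated in \cite{ANT}, which the paper cites as the origin of this proof. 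In that situation your estimate $\|b(c_0^n)\|=o(n)$ says nothing about $\|b(c_0^n)\|/|c_0^n|_{\tilde{G}}$. So what you have proved is the proposition under the extra assumption that $C$ is undistorted in $\tilde{G}$; this happens to suffice for the application to $\tilde{\Gamma}_g$ (whose center is undistorted because the Euler class is bounded), but it is strictly weaker than the statement you were asked to prove.

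The paper's proof is built exactly to beat distortion, and the difference is instructive. It never divides by the power $n$: instead it shows that the Ces\`aro averages $v_n=\frac{1}{n}\sum_{k=1}^n b(c^k)$ are almost fixed points for the affine action $\sigma$ of the \emph{whole} group $\tilde{G}$, since by centrality $b(gc^k)-b(c^k)=b(c^kg)-b(c^k)=\pi(c)^k b(g)$, and the mean ergodic theorem applied to the vector $b(g)$ (after splitting off the $\pi(C)$-invariant part of the space, Proposition \ref{prop:Meanergodic}) makes $\sigma(g)v_n-v_n=\frac{1}{n}\sum_{k=1}^n\pi(c)^k b(g)$ tend to $0$. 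Then Lemma \ref{lem:afpoints} converts almost fixed points into sublinearity in the word metric of $\tilde{G}$: at an almost fixed point $x$, one telescopes over a \emph{geodesic word} $c=s_1\cdots s_{|c|_{\tilde{G}}}$ in the generators of $\tilde{G}$ to get $\|\sigma(c)x-x\|\leq \eps\,|c|_{\tilde{G}}$, and this telescoping along geodesics of $\tilde{G}$, rather than along the $n$ powers of $c_0$, is what handles distortion. Note also that your vectors $\frac{1}{n}b(c_0^n)$ cannot be substituted for the paper's $v_n$ in this scheme: one computes $\sigma(g)\bigl(\tfrac{1}{n}b(c_0^n)\bigr)-\tfrac{1}{n}b(c_0^n)=\tfrac{1}{n}\bigl(\pi(c_0^n)-I\bigr)b(g)+b(g)\to b(g)\neq 0$, so they are not almost fixed points for the full action; averaging over all the values $b(c_0),\dots,b(c_0^n)$ is essential. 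Repairing your argument therefore requires importing the paper's almost-fixed-point step, not merely a remark about distortion.
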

This proposition is proved in Section \S \ref{sec:sublinear}.
The idea behind this statement goes back to Shalom's proof that Property $H_T$ is stable under central extension \cite{Sh3} (see also \cite{Sh2}),
and culminates in a recent paper of Bader, Rosendal and Sauer \cite{BRS}, where very general results are obtained under optimal assumptions
(see \S \ref{section:generalization} for more details). In \cite{ANT}, a short proof of Proposition \ref{prop:growthcocycle} is given 
for the particular case of the Heisenberg
group. In the present paper, we essentially reproduce this proof which is based on the Mean Ergodic Theorem.
Applying the same ideas, one also obtains a proof of Serre's stability of 
Property FH under central extensions which extends to super-reflexive Banach spaces (see \S \ref {section:FH}).

\subsection{Monod-Shalom's ME-rigidity}

A crucial technical step for the proof of Theorem \ref{thm:notUME} is the following statement. 
\begin{thm}\label{thm:mecenter}\cite{MS}
Suppose $(X,\mu)$ is an ME-coupling for $\Gamma:=\tilde{\Gamma}_g$ and $\Lambda:=\Gamma_g\times \Z$. Then there exists a fundamental domain $X_\Lambda$
of $\Lambda$ so that the associated cocycle $\alpha(x,\cdot)$ sends the center of $\Gamma$ to the center of $\Lambda$ for almost all
$x\in X_\Lambda$. 
\end{thm}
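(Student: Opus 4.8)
The plan is to reduce the statement to a triviality property of a single cocycle into the surface group $\Gamma_g$, and then to exploit the centrality of $C$ together with the non-elementarity of an induced boundary action. First, I would identify the two centers explicitly. Since $\Gamma_g$ is a non-elementary word-hyperbolic group it is centerless, so $Z(\Lambda)=\{1\}\times\Z=\ker q$, where $q:\Lambda\to\Gamma_g$ is the projection; on the other side $Z(\Gamma)=C\cong\Z$ is generated by $z$. As $\ker q=Z(\Lambda)$, the conclusion $\alpha(\gamma,x)\in Z(\Lambda)$ for $\gamma\in C$ is \emph{equivalent} to the vanishing on $C$ of the quotient cocycle $\bar\alpha:=q\circ\alpha:\Gamma\times X_\Lambda\to\Gamma_g$. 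Moreover, replacing the fundamental domain $X_\Lambda$ by another one replaces $\alpha$ by a cohomologous cocycle $\alpha'(\gamma,x)=\psi(\gamma x)\alpha(\gamma,x)\psi(x)^{-1}$ for a measurable $\psi:X_\Lambda\to\Lambda$; since $\Lambda=\Gamma_g\times\Z$ splits, such a $\psi$ amounts to a measurable $\phi=q\circ\psi:X_\Lambda\to\Gamma_g$. Thus the theorem amounts to showing that $\bar\alpha|_{C\times X_\Lambda}$ is a \emph{coboundary} for the $C$-action, i.e.\ $\bar\alpha(c,x)=\phi(cx)^{-1}\phi(x)$ for some measurable $\phi$; the required fundamental domain is then produced by lifting $\phi$ through the splitting of $q$.

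Next, I would invoke the Monod-Shalom boundary machinery. Because $\Gamma_g$ is non-elementary hyperbolic it lies in the class $\C_{\mathrm{reg}}$ (that is, $H^2_b(\Gamma_g;\ell^2\Gamma_g)\neq 0$) and acts on its Gromov boundary $\partial\Gamma_g\cong S^1$. Using the amenability of the $\Gamma$-action on a Poisson boundary $B$ of $\Gamma$ together with the cocycle $\bar\alpha$, the $\C_{\mathrm{reg}}$ property yields a measurable $\bar\alpha$-equivariant boundary map $\Phi:B\times X_\Lambda\to\partial\Gamma_g$ whose essential image is non-elementary. This non-degeneracy is exactly what the non-vanishing of bounded cohomology buys: it encodes that $\bar\alpha$ cannot be reduced into a proper elementary (hence virtually cyclic, amenable) subgroup of $\Gamma_g$. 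I expect this to be the technically heaviest input; it is the boundary-theoretic content of the cited work \cite{MS}, and I would quote it rather than reprove it.

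Finally, centrality closes the argument. Suppose, for contradiction, that $\bar\alpha|_C$ is \emph{not} a coboundary. Since $C\cong\Z$ is amenable, the standard dichotomy for cocycles of $\Z$-actions into a hyperbolic group forces $\bar\alpha(c,\cdot)$ to be essentially loxodromic, producing a measurable $C$-equivariant assignment $x\mapsto\{\xi^+(x),\xi^-(x)\}$ of endpoint pairs in $\partial\Gamma_g$. Because $C$ is central in $\Gamma$, the cocycle relation carries this pair to a genuinely $\Gamma$-equivariant measurable field $X_\Lambda\to\{\text{2-point subsets of }\partial\Gamma_g\}$ compatible with $\Phi$. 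But a non-elementary action admits no invariant measurable field of finite subsets of its boundary, contradicting the non-elementarity of $\Phi$ from the previous step. Hence $\bar\alpha|_C$ is a coboundary, which by the first paragraph furnishes the desired fundamental domain. The main obstacle is therefore the construction and non-degeneracy of $\Phi$ in Step 2, while the centrality argument in Step 3 is the conceptual crux that distinguishes the central subgroup $C$ from an arbitrary amenable subgroup (for which the loxodromic alternative genuinely occurs).
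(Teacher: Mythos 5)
Your first step (changing the fundamental domain is the same as passing to a cohomologous cocycle, so the theorem is equivalent to $\bar\alpha=q\circ\alpha$ being a coboundary on $C$) is correct, and it is exactly the framework of the paper's proof: there one produces a measurable $f:X_\Lambda\to\Gamma_g$ with $f(\gamma\cdot x)\,\bar\alpha(\gamma,x)\,f(x)^{-1}=1$ for central $\gamma$ and sets $X_\Lambda'=\{f(x)x\}$. The problem is your Step 3, which carries the whole weight of the argument and rests on two claims that are not justified. First, the ``standard dichotomy'' you invoke --- not a coboundary $\Rightarrow$ essentially loxodromic with a measurable equivariant endpoint pair --- is not a theorem you can quote. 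What amenability of the p.m.p.\ action of $C\cong\Z$ actually provides is an equivariant map $X_\Lambda\to \mathrm{Prob}(\partial\Gamma_g)$, and the standard Adams-style analysis then yields a trichotomy whose non-coboundary alternatives are equivariant fields of \emph{one} or two boundary points; nothing gives you genuine loxodromic dynamics, i.e.\ convergence of the cocycle orbits $\bar\alpha(c^n,x)^{-1}o$ (for a basepoint $o$) to forward/backward limits. The one-point case is a genuine phenomenon for cocycles of $\Z$-actions, so even the list of cases is wrong as stated.

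Second, and fatally, the centrality upgrade is not formal. What centrality gives is only this: if $\eta$ is a $C$-equivariant field, then each translate $(\gamma\cdot\eta)(x)=\bar\alpha(\gamma,\gamma^{-1}\cdot x)\,\eta(\gamma^{-1}\cdot x)$ is \emph{again} a $C$-equivariant field. To conclude $\gamma\cdot\eta=\eta$ you need uniqueness or canonicity of $\eta$, and this cannot be sidestepped: when $\bar\alpha|_C$ \emph{is} a coboundary, $\bar\alpha(c,x)=\phi(c\cdot x)^{-1}\phi(x)$, the fields $\eta(x)=\phi(x)^{-1}F$ (any fixed finite $F\subset\partial\Gamma_g$) are $C$-equivariant --- so $C$-equivariant finite fields exist precisely in the situation the theorem asserts, and their mere existence can never produce your contradiction. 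The only canonical candidate, the limit set of $\{\bar\alpha(c^n,x)^{-1}o\}_{n\in\Z}$, is $C$-equivariant, but its $\Gamma$-equivariance fails on the nose: from $\bar\alpha(c^n,\gamma\cdot x)=\bar\alpha(\gamma,c^n\cdot x)\,\bar\alpha(c^n,x)\,\bar\alpha(\gamma,x)^{-1}$ the perturbing term $\bar\alpha(\gamma,c^n\cdot x)$ is unbounded in $n$ for a general ME-coupling (the theorem is about ME, not UME), so the limit sets need not match; repairing this requires a recurrence argument plus actual convergence of orbits, neither of which you establish. Note that the paper never needs any $\Gamma$-equivariant object: it induces the $c_0$-representation $\pi_0$ of $\Gamma_g$ on $\mathcal{H}_0\subset L^2(\SL(2,\R)/\Gamma_g)$ through the coupling, gets $\HHHH_b^2(\Gamma,\Ind_\Lambda^\Gamma\pi_0)\neq 0$ by Monod--Shalom inflation and induction, and then the isomorphism $\HHHH_b^2(\Gamma/C,(\Ind_\Lambda^\Gamma\pi_0)^C)\cong\HHHH_b^2(\Gamma,\Ind_\Lambda^\Gamma\pi_0)$ ($C$ amenable normal) forces a nonzero $C$-invariant vector, i.e.\ a $C$-equivariant Hilbert-valued field $\psi$, which is untwisted into the coboundary $f$ using smoothness of the $c_0$-action (Howe--Moore) and torsion-freeness. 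So the non-vanishing of bounded cohomology is used to \emph{produce} the equivariant object directly, rather than to contradict one; if you want a boundary-theoretic proof you must first prove the loxodromic structure and the upgrade, which is the actual content missing from your argument.
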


In \cite{MS}, the above result is hidden in the proof of Theorem 1.17  where it is shown for the central extensions of groups having nonzero second bounded
cohomology, with coefficients in some $C_0$-representation on some separable Hilbert space. For the convenience of the reader, we sketch its proof 
for the particular case of surface groups in \S \ref{section:proofMain}, following the arguments given in \cite{MS}.

As Yehuda Shalom pointed to us, as stated, Theorem \ref {thm:mecenter} is {\it a priori} not enough for our purpose, since changing the fundamental domain might result in a cocycle which is no longer integrable. However, in our very specific situation (see \S \ref{section:proofMain}) the proof of Theorem \ref {thm:mecenter} provides some additional information that enables us to go around this difficulty. 

\subsection{Organization} Section \ref{sec:bounded} is dedicated to the proof of  Theorem \ref{thm:mecenter}. The proof of Proposition \ref{prop:growthcocycle} is given in \S \ref{sec:ergodic}.  Theorem \ref{thm:notUME} is proved in \S \ref {section:proofMain}. In \S \ref{sec:p<1}, we establish Theorem \ref {prop:p<1}. Finally, \S \ref{sec:further} deals with further results in the spirit of Proposition \ref {prop:growthcocycle}.

\subsection{Acknowledgements}
We thank Yehuda Shalom for signaling a point that we had overlooked in an earlier version of this paper and for suggesting Theorem \ref{prop:p<1}.
We are grateful to Bachir Bekka for helping us with the bibliography.

\section{Measure Equivalence and Bounded Cohomology}\label{sec:bounded}

In preparation for the proof of Theorem \ref{thm:notUME} and Theorem \ref{thm:mecenter}, we first recall some terminology and define some 
notation regarding ME-coupling and bounded cohomology. We refer to \cite{Gr}, \cite{MS}, \cite{Sh3} for details. 

Following the notation used in the introduction, let $\Gamma$ and $\Lambda$ be two countable discrete groups which are ME, and let $(X,\mu)$ be a coupling space.
We let $X_\Lambda$ be a fundamental domain for $\Lambda$  and $\alpha:\Gamma\times X_\Lambda\rightarrow \Lambda$ be the corresponding
cocycle map defined by the rule that $\alpha(\gamma, x)\gamma x \in X_\Lambda$. We shall denote the element $\alpha(\gamma, x)\gamma x$ by $\gamma\cdot x$.

Given some isometric representation $\pi$ of $\Lambda$ on a Banach space $B$, we define the induced representation $\Ind_\Lambda^\Gamma\pi$ of $\Gamma$ on
$$L^p(X_\Lambda,B):=\{\psi:X_\Lambda\rightarrow B \hspace*{.2cm}  |  \int_{X_\Lambda}|\psi|^p d\mu<\infty\} $$
in the following way:
$$\gamma\psi(x)=\pi(\alpha(\gamma^{-1},x)^{-1})\psi(\gamma^{-1}\cdotp x).$$

Let us now recall the concept of bounded cohomology of a discrete group with coefficients in a
representation on some separable Banach space (see \cite{MS} for details). Suppose $(\pi, E)$-is a $\Gamma$-module such that 
$E$ is the dual of some separable Banach space and $\Gamma$ action is defined by the adjoint actions. $(\pi, E)$ is called \textit{a coefficient 
$\Gamma$-module}. The \textit{bounded cohomology} of $\Gamma$ with coefficients in $(\pi, E)$, denoted by $\HHHH_b^{\bullet}(\Gamma,\pi)$, is defined as the cohomology of the complex
$$0\rightarrow l^\infty(\Gamma, E)^\Gamma\rightarrow l^\infty(\Gamma^2, E)^\Gamma\rightarrow l^\infty(\Gamma^3, E)^\Gamma\rightarrow\cdots,$$
where the $\Gamma$-action is defined on $l^\infty(\Gamma^n, E)$ in the following way:
$$(\gamma\cdot f)(\gamma_0,\dots,\gamma_n)=\pi(\gamma)(f(\gamma^{-1}\gamma_0,\dots,\gamma^{-1}\gamma_n)).$$

\bigskip

\subsection{Proof of Theorem \ref{thm:mecenter}}\label{pfmecenter}
Recall that with our notation, $\Lambda=\Z\times \Gamma_g$, while $\Gamma$ is the non-trivial central extension $\tilde{\Gamma_g}$.

We know that $\Gamma_g$ embeds as a cocompact lattice inside $G=SL_2(\R)$. Consider the quasi-regular representation $\Gamma_g\curvearrowright_\pi L^2(G/\Gamma_g)$ which splits as $\textbf{1}\oplus\mathcal{H}_0$,  
where $\mathcal{H}_0$ denotes the orthogonal complement of the constant functions. 
We denote the representation of $\Gamma_g$ on $\mathcal{H}_0$ by $\pi_0$. It follows from  a theorem of Howe and Moore \cite{HM} that $\pi_0$ is $c_0$ (i.e.\ its coefficients vanish at infinity). 

From Proposition 7.12 of \cite{MS}, it follows that  $\HHHH_b^2(\Gamma_g,\pi_0)\neq 0$.
We extend $\pi_0$ to a representation  (still denoted by $\pi_0$) of  $\Lambda=\Gamma_g\times \Z$ by letting $\Z$ act trivially.  By Corollary 3.6 of \cite{MS}, the \textit{inflation map} sends $\HHHH_b^2(\Gamma_g,\pi_0)$ 
injectively inside $\HHHH_b^2(\Lambda,\pi_0)$. Now, we induce this representation on $\Gamma$. By Theorem 4.4 of \cite{MS}, the \textit{induction map} from $\HHHH_b^2(\Lambda,\pi_0)$ to 
$\HHHH_b^2(\Gamma,\Ind_\Lambda^\Gamma\pi_0)$ is injective. These two facts together imply that $\HHHH_b^2(\Gamma,\Ind_\Lambda^\Gamma\pi_0)$ is nonzero.
By Proposition 3.8 (\cite{MS}), the \textit{inflation map} from $\HHHH_b^2(\Gamma/\Z,(\Ind_\Lambda^\Gamma\pi_0)^\Z)$ to $\HHHH_b^2(\Gamma,\Ind_\Lambda^\Gamma\pi_0)$ is an isomorphism (which
is due to the fact that $\Z$ is a normal amenable subgroup of $\Gamma$).
Since $\HHHH_b^2(\Gamma,\Ind_\Lambda^\Gamma\pi_0)$ is nonzero, we obtain that there exists a nonzero $\Z$-invariant vector in $\Ind_\Lambda^\Gamma\pi_0$. 
This means that there exists a nonzero measurable function
$\psi:X_\Lambda\rightarrow \mathcal{H}_0$ such that 
$$\psi(\gamma\cdot x)=\pi_0(\alpha(\gamma,x))\psi(x)$$
for all $\gamma$ in $\Z$ and for almost all $x$ in $X_\Lambda$. Consider the quotient space $\pi_0(\Lambda)\backslash\mathcal{H}_0=\pi_0(\Gamma_g)\backslash\mathcal{H}_0$. Being $c_0$,  the action
of $\Gamma_g$ on $\mathcal{H}_0$ is smooth, so we can get a measurable 
section $s$ from $\Gamma_g\backslash\mathcal{H}_0$ to $\mathcal{H}_0$, and  a measurable map $f:X_\Lambda\rightarrow \Gamma_g$ satisfying 
$$\pi_0(f(x))\psi(x)=s([\psi(x)]),$$
for almost all $x\in X_\Lambda$, where $[\xi]$ denotes the image of $\xi\in\mathcal{H}_0$ in the quotient space $\Lambda\backslash\mathcal{H}_0$.
By the definition of $f$, $f(\gamma\cdot x)\alpha(\gamma,x)f(x)^{-1}$ fixes $s([\psi(x)])$ for almost all $x\in X_\Lambda$. 
But, the stabilizer of each nonzero vector in $\mathcal{H}_0$ is $\Z$. 
Therefore, modifying $f$ on $\{x | \psi(x)=0\}$, if needed, we get $f(\gamma\cdot x)\alpha(\gamma,x)f(x)^{-1}\in \Z$ for all $\gamma\in\Z$ and for almost all $x\in X_\Lambda$. 
We define the new fundamental domain $X_\Lambda'=\{f(x)x : x\in X_\Lambda\}$. 
Now, we have  
$$\gamma f(x)x=f(x)\gamma x=f(x) \alpha(\gamma,x)^{-1}\gamma\cdot x=\left(f(\gamma\cdot x)\alpha(\gamma,x)f(x)^{-1}\right)^{-1}\left(f(\gamma\cdot x)\gamma\cdot x\right)$$
for all $\gamma\in\Z$ and for all $x\in X_\Lambda$. It follows that the cocycle $\alpha'$ defined by the formula $$\alpha'(\gamma,f(x)x)=f(\gamma\cdot x)\alpha(\gamma,x)f(x)^{-1},$$ sends the center of $\Gamma$ inside the center of $\Lambda$ for almost all $y\in X_\Lambda'$.
\hfill $\square$


\section{Reduced cohomology and central extension}\label {sec:ergodic}
\subsection{Proof of Proposition \ref{prop:growthcocycle}}\label{sec:sublinear}
We start with the following simple observation.
\begin{lem}\label{lem:afpoints}
Let $G$ be a locally compact, compactly generated group acting by isometries on a metric space $X$, and let $|\cdot|$ be the word metric on $G$ associated
to some compact generating subset $S$. Assume that this action has almost-fixed points, \i.e, for all $\eps>0$, there exists $x\in X$ such that 
$\sup_{s\in S} d(sx,x)\leq \eps$. Then its orbits are sublinear, in the sense that
$$\frac{d(gx,x)}{|g|}\to 0$$
for every $x\in X$, as $|g|\to \infty$. 
\end{lem}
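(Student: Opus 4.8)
The plan is to exploit the fact that the displacement function $g\mapsto d(gx,x)$ behaves subadditively along a decomposition of $g$ into generators. Concretely, given $g\in G$ with $|g|=n$, write $g=s_1\cdots s_n$ with each $s_j\in S$ (we may assume $S=S^{-1}$; in any case the isometry property gives $d(s^{-1}x,x)=d(x,sx)=d(sx,x)$). Setting $g_j=s_1\cdots s_j$ and $g_0=e$, the triangle inequality together with the fact that each $g_{j-1}$ acts by isometries yields
$$d(gx,x)\le \sum_{j=1}^n d(g_jx,g_{j-1}x)=\sum_{j=1}^n d(s_jx,x).$$
Thus the whole problem is reduced to controlling the per-generator displacement $\sup_{s\in S}d(sx,x)$, which is exactly what the almost-fixed-point hypothesis governs.

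Next I would bring in the almost-fixed points. Fix $\eps>0$ and let $x_\eps\in X$ satisfy $\sup_{s\in S}d(sx_\eps,x_\eps)\le \eps$. Applying the displacement bound above at the basepoint $x_\eps$ gives $d(gx_\eps,x_\eps)\le |g|\,\eps$ for every $g$. The point $x_\eps$ of course depends on $\eps$, whereas the statement concerns an arbitrary fixed $x$; to bridge this I would transfer the estimate by one more application of the triangle inequality, using $d(gx,gx_\eps)=d(x,x_\eps)$:
$$d(gx,x)\le d(gx,gx_\eps)+d(gx_\eps,x_\eps)+d(x_\eps,x)=2\,d(x,x_\eps)+d(gx_\eps,x_\eps)\le 2\,d(x,x_\eps)+|g|\,\eps.$$

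Finally I would divide by $|g|$ and pass to the limit in the right order. For fixed $\eps$ the quantity $d(x,x_\eps)$ is a constant, so
$$\frac{d(gx,x)}{|g|}\le \frac{2\,d(x,x_\eps)}{|g|}+\eps,$$
and letting $|g|\to\infty$ gives $\limsup_{|g|\to\infty} d(gx,x)/|g|\le \eps$. Since $\eps>0$ was arbitrary and the ratio is nonnegative, the $\limsup$ is $0$, which is the claim. The only genuine subtlety—and the step I would be most careful about—is precisely this interchange of limits: because the almost-fixed point $x_\eps$ depends on $\eps$, one cannot send $\eps\to 0$ and $|g|\to\infty$ simultaneously, and must instead fix $\eps$, take the $\limsup$ in $|g|$ first (killing the $d(x,x_\eps)/|g|$ term), and only then let $\eps\to 0$.
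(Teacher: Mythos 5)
Your proof is correct and follows essentially the same route as the paper: decompose $g$ into generators and use the triangle inequality to get $d(gx_\eps,x_\eps)\le \eps|g|$ at an almost-fixed point, then observe the $\limsup$ of $d(gx,x)/|g|$ is independent of the basepoint. The only difference is cosmetic: the paper states the basepoint-independence as ``clearly,'' while you spell out the transfer estimate $d(gx,x)\le 2d(x,x_\eps)+d(gx_\eps,x_\eps)$ explicitly.
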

\noindent{\it Proof.}
Let $\lambda=\limsup_{|g|\to \infty} \frac{d(gx,x)}{|g|}$.
Clearly, $\lambda$ does not depend on $x$. Applying it to $x$ such that $\sup_{s\in S} d(sx,x)\leq \eps$, we see that it is less than $\eps$ for any 
$\eps>0$, hence equal to $0$. Indeed, write $g$ as a product of $|g|$ elements in $S$, $g=s_1s_2\ldots$ and use triangular inequality to write
$$d(gx,x)\leq d(s_1x,x)+d(s_1s_2x,s_1x)+\ldots=d(s_1x,x)+d(s_2x,x)+\ldots\leq \eps|g|. \; \qed$$ 

The following theorem is originally due to Alaoglu and Birkhoff for super-reflexive Banach spaces. In \cite{BRS}, the authors introduce the following 
terminology: a (strongly) continuous representation of a locally compact group is weakly almost periodic (wap) if its orbits $\pi(G)v$ are weakly 
relatively compact.
\begin{thm}\label{thm:decomposition}\cite{AB, BFGM, BRS}
Let $(G,\pi)$ be a wap representation of a group $G$ on a Banach space $B$. Then the space of $\pi(G)$-invariant vectors has a canonical complement. 
In particular, this complement is invariant under the group of all norm-preserving linear transformations of $B$. 
\end{thm}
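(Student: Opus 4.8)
The plan is to exhibit the canonical complement as the kernel of an averaging projection $P$ onto $B^G$ built solely from the norm and the operators $\pi(g)$. First I would fix $v\in B$ and consider its orbit $\pi(G)v$, which by the \emph{wap} hypothesis is weakly relatively compact; by the Krein--Smulian theorem the closed convex hull $K_v:=\overline{\textnormal{conv}}\,\pi(G)v$ is then weakly compact. Each $\pi(g)$ maps $K_v$ into itself and, being an isometry, acts by a \emph{noncontracting} affine transformation, so the Ryll--Nardzewski fixed point theorem yields a $\pi(G)$-fixed point inside $K_v$. This is precisely the abstract mean-ergodic input replacing the averaging available when $G$ is amenable, and it is the only place the geometry of the orbits enters.

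Next I would set $N:=\overline{\textnormal{span}}\{\pi(g)v-v:\ g\in G,\ v\in B\}$; being a norm-closed subspace it is weakly closed. Since every convex combination $\sum_g\lambda_g\,\pi(g)v$ differs from $v$ by the coboundary $\sum_g\lambda_g(\pi(g)v-v)\in N$, passing to weak limits gives $v-K_v\subseteq N$, so the fixed point found above lies in $(v+N)\cap B^G$ and hence $B=B^G+N$. The crux, and the step I expect to be the main obstacle, is that this sum is \emph{direct}, i.e.\ $B^G\cap N=\{0\}$; this is the genuinely analytic heart of the statement. As every $\pi(G)$-invariant functional annihilates each coboundary and therefore all of $N$, it suffices to separate a nonzero $w\in B^G$ from $N$ by such a functional. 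I would produce one on the dual side: the adjoint representation is again \emph{wap}, so for $\phi_0\in B^*$ with $\phi_0(w)\neq 0$ the weakly compact convex hull of $\pi^*(G)\phi_0$ contains, by Ryll--Nardzewski, an invariant functional $\psi$. Every point of that orbit takes the value $\phi_0(w)$ at the fixed vector $w$, hence so does $\psi$, giving $\psi(w)\neq 0$, while the invariance of $\psi$ forces $\psi(w)=0$ because $w\in N$ — a contradiction unless $w=0$. (In the super-reflexive case of Alaoglu and Birkhoff this delicate point is avoided, since $K_v$ then carries a unique element of minimal norm, which is automatically fixed and furnishes $Pv$ directly.)

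With directness established, $B=B^G\oplus N$, and the associated projection $P$ onto $B^G$ is characterized intrinsically as the unique element of $B^G\cap(v+N)$; linearity is then immediate from this description, and $\|P\|\le 1$ since $Pv\in K_v$ lies in the ball of radius $\|v\|$. Finally, $P$ was manufactured only from the norm and the maps $\pi(g)$, so it is canonical: any norm-preserving linear transformation $T$ of $B$ normalizing $\pi(G)$ sends $\pi(G)v$ to $\pi(G)(Tv)$, hence $K_v$ to $K_{Tv}$ and fixed points to fixed points, whence $TP=PT$ and $T(N)=N$. In the application the relevant such transformations are the operators $\pi(\tilde g)$, which even commute with the central action, so they preserve the canonical complement — the invariance asserted in the theorem.
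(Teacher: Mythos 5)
The paper never proves this theorem: it is quoted from \cite{AB}, \cite{BFGM}, \cite{BRS}, so your attempt has to be judged against the proofs in that literature. The first half of your argument is correct and standard (it is also exactly the technique the paper itself uses in Proposition \ref{prop:BRS}): Krein--Smulian gives weak compactness of $K_v=\overline{\mathrm{conv}}\,\pi(G)v$, Ryll--Nardzewski applies because a group of isometries is noncontracting, and since $N:=\overline{\mathrm{span}}\{\pi(g)v-v\}$ is weakly closed and $K_v\subseteq v+N$, the fixed point yields $B=B^G+N$. Your reading of the ``in particular'' clause (invariance of the complement under isometries normalizing $\pi(G)$) is also the one the paper actually needs.

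The gap is where you predicted it, in the directness $B^G\cap N=\{0\}$, and it is genuine: the claim that \emph{the adjoint representation is again wap} is false, and it fails precisely for the representations this paper applies the theorem to. Take $B=L^1(S^1)$ with $\Z$ acting by an irrational rotation $\alpha$; this is wap (orbits are even norm-precompact, and more generally measure-preserving actions on $L^1$ of a probability space are wap by Dunford--Pettis), but the adjoint action on $L^\infty(S^1)$ is not: with $f=\mathbf{1}_{[0,1/2)}$, $a_n=f(\cdot+\eps_n)$ where $\eps_n\in\{k\alpha \bmod 1\}$ decreases to $0$, and $\phi_m(h)=\delta_m^{-1}\int_{[-\delta_m,0)}h$ with $\delta_m\downarrow 0$, one computes $\phi_m(a_n)=\min(\delta_m,\eps_n)/\delta_m$, so $\lim_n\lim_m\phi_m(a_n)=1\neq 0=\lim_m\lim_n\phi_m(a_n)$; by Grothendieck's double-limit criterion the $L^\infty$-orbit of $f$ is not weakly relatively compact. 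Thus for $B=L^1(\Omega,\nu)$ --- the case $p=1$, which is the one the paper's main theorem (IME) rests on --- your separating invariant functional is never produced. There is no cheap repair either: replacing weak compactness of the dual hull by weak* compactness does not help, because Ryll--Nardzewski fails for weak*-compact convex sets (an affine, isometric, weak*-continuous action on a weak*-compact convex set can be fixed-point free; otherwise, applied to the means on $\ell^\infty(G)$, every group would be amenable). This is why the actual proofs obtain directness from heavier input: uniform convexity in \cite{AB} (the unique minimal-norm point of $K_v$, as in your own parenthetical remark), and, for general wap representations, the structure of the compact semitopological semigroup $\overline{\mathrm{conv}}^{\,\mathrm{WOT}}\pi(G)$ (existence of a zero element), equivalently Ryll--Nardzewski's uniqueness of the invariant mean on wap functions, as in \cite{BRS} and de Leeuw--Glicksberg. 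Your scheme can be salvaged ad hoc when $B$ is reflexive (bounded sets in $B^*$ are then weakly relatively compact) and when $B=L^1$ of a measure-preserving action (for invariant $w\neq 0$ the functional $\mathrm{sign}(w)\in L^\infty$ is invariant and nonzero on $w$), but as written your argument does not prove the stated theorem.
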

This includes, for instance, the case where $B$ is reflexive, but also the case where $G$ acts on $L^1$ of a probability space via measure-preserving
automorphisms. In the case of a single transformation  (i.e.\ $G=\Z$), the theorem is a consequence of the Mean Ergodic Theorem which was proved for
wap representations already in 1938 \cite{Y} (see also \cite{K}).

\begin{prop}\label{prop:Meanergodic}
Let  $$1\to C\to \tilde{G}\to G\to 1 $$
be a central extension of locally compact groups such that $C\simeq \Z$ and let $(\tilde{G},\pi)$ be a continuous representation of $\tilde{G}$ on a Banach space $B$
without $\pi(C)$-invariant vectors. Suppose, in addition, that $\pi(C)$ satisfies the Mean Ergodic Theorem. Then $\overline{H}^1(\tilde{G},\pi)=0$.
\end{prop}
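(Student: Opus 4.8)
The plan is to use the standard reformulation that $\overline{H}^1(\tilde{G},\pi)=0$ means every continuous cocycle $b\in Z^1(\tilde{G},\pi)$ lies in the closure of the coboundaries $B^1(\tilde{G},\pi)$ for the topology of uniform convergence on compact subsets of $\tilde{G}$; equivalently, the associated affine isometric action has almost fixed points. First I would fix a generator $c$ of $C\simeq\Z$ and introduce the ergodic averages
\[
A_n=\frac{1}{n}\sum_{k=0}^{n-1}\pi(c)^k .
\]
Since $\pi$ has no $\pi(C)$-invariant vector and $\pi(C)$ satisfies the Mean Ergodic Theorem, $A_n$ converges strongly to the projection onto the invariant subspace, which is reduced to $\{0\}$; hence $A_n\to 0$ strongly, while $\|A_n\|\leq 1$ because $\pi$ acts by isometries.

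The crux is to manufacture approximate fixed points by averaging the cocycle itself along the central direction. Given $b$, I would set
\[
w_n=\frac{1}{n}\sum_{k=0}^{n-1}b(c^k).
\]
Because $c$ is central, one has $gc^k=c^kg$ for every $g\in\tilde{G}$; comparing the two expansions of $b(gc^k)=b(c^kg)$ coming from the cocycle identity yields $\pi(g)b(c^k)=b(c^k)+\pi(c^k)b(g)-b(g)$. Summing over $k$, dividing by $n$, and recognizing $\tfrac1n\sum_{k}\pi(c^k)b(g)=A_nb(g)$ gives the key identity
\[
\pi(g)w_n-w_n+b(g)=A_n\,b(g)\qquad(g\in\tilde{G},\ n\geq 1),
\]
equivalently $b-\partial(-w_n)=A_nb$, where $\partial(-w_n)$ denotes the coboundary attached to the vector $-w_n$. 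The conceptual content here is precisely that centrality collapses all the cross-terms, so that the affine displacement of $w_n$ along $g$ is exactly the ergodic average of $b(g)$.

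It then remains to upgrade the pointwise convergence $A_nb(g)\to 0$ to convergence uniform over a compact set $K\subseteq\tilde{G}$; this is the only genuinely analytic point, and I expect it to be the main (though routine) obstacle. Since $b$ is continuous, $b(K)$ is compact in $B$, and a uniformly bounded sequence of operators converging strongly converges uniformly on compact subsets; hence $\sup_{g\in K}\|A_nb(g)\|\to 0$. Consequently $\partial(-w_n)\to b$ uniformly on compact sets, so $b\in\overline{B^1(\tilde{G},\pi)}$. As $b\in Z^1(\tilde{G},\pi)$ was arbitrary, this gives $\overline{H}^1(\tilde{G},\pi)=0$.
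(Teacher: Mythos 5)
Your proof is correct and follows essentially the same route as the paper: averaging the cocycle along the central subgroup, using centrality to turn the displacement $\pi(g)w_n - w_n + b(g)$ into the ergodic average $A_n b(g)$, and invoking the Mean Ergodic Theorem (with no $\pi(C)$-invariant vectors) to conclude that the affine action has almost fixed points. The only difference is that you explicitly upgrade pointwise convergence to uniform convergence on compact sets via the boundedness of $\|A_n\|$ and compactness of $b(K)$, a routine step the paper leaves implicit.
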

This proposition is a special case of \cite[Theorem 2]{BRS} when the representation is wap. Their conclusion is stronger as they also obtain vanishing of
the reduced cohomology groups of higher degree. Their assumptions on $\tilde{G}$ and $C$ are also more general, but we shall see below that our proof can be
easily extended to this situation.

\begin{proof}
Let $\sigma$ be the affine action associated to $b$. 
Let $c$ be a generator of $C$ and let $$v_n=\frac{1}{n}\sum_{k=1}^nb(c^k).$$ For every $g\in \tilde{G}$, one has
\begin{eqnarray*}
\sigma(g)v_n-v_n & = & \frac{1}{n}\sum_{k=1}^n(b(gc^k)-b(c^k))\\
                            & = & \frac{1}{n}\sum_{k=1}^n(b(c^kg)-b(c^k))\\
                          & = & \frac{1}{n}\sum_{k=1}^n\pi(c)^kb(g) \\
\end{eqnarray*}
which tends to zero when $n\to \infty$ by the Mean Ergodic Theorem.\end{proof}

\

Proposition \ref{prop:growthcocycle} is a corollary of the following more general statement:

\begin{prop}\label{prop:growthcocycleBis}
Let $$1\to C\to \tilde{G}\to G\to 1 $$
be a central extension such that $C$ is isomorphic to $\Z$ and contained in the derived subgroup of $\tilde{G}$. We assume $\tilde{G}$ finitely generated
and equipped with a word metric $|\cdot|_{\tilde{G}}$. Let $(B,\pi)$ be a wap representation of $\tilde{G}$. Then,  every $1$-cocycle $b\in Z^1(\tilde{G},\pi)$ is sublinear in restriction to the central subgroup $C$:
$$\frac{\|b(c)\|}{|c|_{\tilde{G}}}\to 0$$
as $c\in C$ and $|c|_{\tilde{G}}\to \infty$.
\end{prop}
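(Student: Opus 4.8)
The plan is to split the cocycle according to a $\tilde{G}$-invariant decomposition of $B$ adapted to the central subgroup, and then to treat the two pieces by the two mechanisms already available: the Mean Ergodic Theorem for the piece on which $C$ has no invariant vectors, and the derived-subgroup hypothesis for the piece on which $\tilde{G}$ acts trivially. The connecting device is the canonical complement furnished by Theorem \ref{thm:decomposition}.

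First I would apply Theorem \ref{thm:decomposition} to the restriction $\pi|_C$, which is again wap because its orbits $\pi(C)v$ are contained in the weakly relatively compact orbits $\pi(\tilde{G})v$. This produces a splitting $B=B^C\oplus B_0$, where $B^C$ is the space of $\pi(C)$-invariant vectors and $B_0$ is the canonical complement. Since the operators $\pi(g)$ are norm-preserving, $B_0$ is $\pi(\tilde{G})$-invariant by the invariance clause of Theorem \ref{thm:decomposition}; and $B^C$ is $\pi(\tilde{G})$-invariant because $C$ is central, so that each $\pi(g)$ carries $C$-invariant vectors to $C$-invariant vectors. The cocycle therefore splits as $b=b'+b_0$ with $b'\in Z^1(\tilde{G},\pi|_{B^C})$ and $b_0\in Z^1(\tilde{G},\pi|_{B_0})$, and it suffices to establish sublinearity along $C$ for each summand.

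For $b_0$, the representation $\pi|_C$ has no nonzero invariant vector on $B_0$ (any such vector would lie in $B^C\cap B_0=0$), and since $\pi|_{B_0}$ is wap, the single transformation $\pi(c_0)$, for $c_0$ a generator of $C$, satisfies the Mean Ergodic Theorem. Proposition \ref{prop:Meanergodic} then yields $\overline{H}^1(\tilde{G},\pi|_{B_0})=0$, i.e.\ the associated affine action has almost-fixed points, whereupon Lemma \ref{lem:afpoints} forces its orbits to be sublinear; thus $\|b_0(g)\|/|g|_{\tilde{G}}\to 0$ over all of $\tilde{G}$, and in particular along $C$. For $b'$, this is where both hypotheses on $C$ enter. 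On $B^C$ every element of $C$ acts trivially, so for $c\in C$ and any $g\in\tilde{G}$ the cocycle identity together with $cg=gc$ gives $b'(c)=\pi(g)b'(c)$; hence $b'(c)\in B^{\tilde{G}}$. Applying Theorem \ref{thm:decomposition} once more, now to the wap subrepresentation on $B^C$, I obtain a $\pi(\tilde{G})$-equivariant projection $P$ of $B^C$ onto $B^{\tilde{G}}$, so that $P\circ b'$ is a cocycle into the trivial module $B^{\tilde{G}}$, i.e.\ a homomorphism $\tilde{G}\to B^{\tilde{G}}$, which necessarily annihilates $[\tilde{G},\tilde{G}]$. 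Because $C\subseteq[\tilde{G},\tilde{G}]$ and $b'(c)=P(b'(c))=(P\circ b')(c)$, we conclude $b'(c)=0$ for all $c\in C$. Combining the two pieces, $b(c)=b_0(c)$ along $C$, which is sublinear.

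The main obstacle, and the conceptual heart of the argument, is the interplay of the two hypotheses in the $b'$ term: \emph{centrality} of $C$ is what promotes $b'(c)$ to a genuinely $\tilde{G}$-invariant vector, after which the \emph{derived-subgroup} condition annihilates it via the homomorphism $P\circ b'$. The delicate technical point to verify carefully is that both decompositions are truly $\tilde{G}$-invariant; this rests on the clause of Theorem \ref{thm:decomposition} guaranteeing that the canonical complement and its associated projection are compatible with every norm-preserving operator, combined with the observation that $B^C$ is itself $\pi(\tilde{G})$-invariant by centrality.
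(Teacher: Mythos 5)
Your proof is correct and follows essentially the same route as the paper's: both rest on the canonical decomposition of Theorem \ref{thm:decomposition}, the observation that centrality forces the $C$-invariant component of $b(c)$ to be $\pi(\tilde{G})$-invariant, the fact that a cocycle into a trivial module is a homomorphism and hence kills $C\subseteq[\tilde{G},\tilde{G}]$, and Proposition \ref{prop:Meanergodic} combined with Lemma \ref{lem:afpoints} for the complementary piece. The only difference is organizational: the paper first splits off the $\pi(\tilde{G})$-invariant vectors and then decomposes with respect to $C$, whereas you decompose with respect to $C$ first and dispose of the invariant part via the equivariant projection $P$ --- the same ingredients in the opposite order.
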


\begin{proof}
We first apply Theorem \ref{thm:decomposition} to reduce to the case where $\pi(\tilde{G})$ has no nonzero invariant vectors. Indeed, otherwise $b$ decomposes
accordingly as $b'+b''$, where $b'$ is a morphism and therefore factors through $\tilde{G}/C$ since $C$ belongs to $[\tilde{G},\tilde{G}]$. By the Mean Ergodic Theorem, $B$
decomposes canonically as a direct sum $B_1\oplus B_2$, where $B_1$ is the space of $\pi(C)$-invariant vectors. Any $1$-cocycle $b\in Z^1(\tilde{G},\pi)$
decomposes accordingly as a direct sum $b_1+b_2$. Observe that for all $c\in C$, $b_1(c)$ is $\pi(\tilde{G})$-invariant. Hence $b_1$ is trivial in restriction to $C$.
Now, by Proposition \ref{prop:Meanergodic}, the affine action associated to $b_2$ has almost fixed points. We conclude with Lemma \ref{lem:afpoints}.
\end{proof}

\section{Proof of  Theorem \ref{thm:notUME}}\label{section:proofMain}

 From now on, we let $X$ be an $L^1$-coupling between the groups $\Gamma:=\tilde{\Gamma}_g$ and 
 $\Lambda:= \Gamma_g\times \Z$, and $X_\Gamma$ and $X_\Lambda$ be fundamental domains of $\Gamma$ and $\Lambda$ respectively.
 By Theorem \ref{thm:mecenter}, there exists a fundamental domain $X_\Lambda'$ so that the resulting cocycle $\alpha'(\cdot, x)$ sends the center of $\Gamma$ to 
 the center of $\Lambda$ for almost all $x\in X_\Lambda'$. Moreover, the proof of that theorem specifies $X_\Lambda'$ as $\{f(x)x,\;x\in X_\Lambda\}$ for some measurable function $f:X_\Lambda\to \Gamma_g$, while $\alpha':\Gamma\times X_{\Lambda}'\to \Lambda$ is defined as 
 \begin{equation}\label {eq:alpha'}
 \alpha'(\gamma,f(x)x)=f(\gamma\cdot x)\alpha(\gamma,x)f(x)^{-1},
 \end{equation}
for all $x\in X_{\Lambda}.$
   Observe that $\Lambda$ has an obvious morphism $b$ to $\R$, mapping its second factor to $\Z\subset \R$. 
 This morphism can be interpreted as a $1$-cocycle $b\in Z^1(\Lambda,1)$ associated to the trivial representation. 
 Obviously, this cocycle grows linearly in the direction of $\Z$. 
 
We induce the cocycle $b\in Z^1(\Lambda,1)$ to a 1-cocycle $B$ of the induced representation $\Ind_\Lambda^\Gamma1$ by the following expression:
$$B(\gamma)(x):=b(\alpha(\gamma, x)). $$
This formula makes sense even if $X$ is simply an ME coupling. However, its $L^1$-integrability, and therefore the fact that $B\in Z^1(\Gamma,\Ind_\Lambda^\Gamma1)$ follows from the condition that 
the coupling is integrable. Now, the fact that $b$ factors through $\Z$ together with (\ref{eq:alpha'}) implies that for all $x\in X_{\Lambda}$ and $\gamma\in \Gamma$,
\begin{equation}\label{eq:indcocycle}
B(\gamma)(x)=B'(\gamma)(y):=b( \alpha'(\gamma,y)),
\end{equation}
where $y=f(x)x\in X_{\Lambda}'$. 
It follows that $B'\in Z^1(\Gamma,\Ind_\Lambda^\Gamma1)$ and satisfies $\|B'(\gamma)\|=\|B(\gamma)\|$ for all $\gamma\in \Gamma$.

We shall prove that $B'$ does not grow sublinearly in the direction of the central subgroup $C<\tilde{\Gamma}_g$, contradicting Proposition 
\ref{prop:growthcocycle}. We denote the set of integers between $a$ and $b$ in the center of $\Gamma$ by the symbol $[a,b]_\Gamma$. 
Similarly, we define $[a,b]_\Lambda$. In the rest of the proof, $\gamma$ and $\lambda$ denote elements in the center of $\Gamma$ and $\Lambda$,
 respectively. Without loss of generality, we can assume that $\mu(X_\Lambda'\cap X_\Gamma)>0$.
Therefore, for every positive integer $k$, 
$$\mu([-kn,kn]_\Gamma X_\Lambda')\geq \mu(X_\Lambda'\cap X_\Gamma) (2kn+1),$$
which implies that 
$$\mu([-kn, kn]_\Gamma X_\Lambda' \backslash [-n, n]_\Lambda X_\Lambda') \geq \mu(X_\Lambda'\cap X_\Gamma)(2kn+1) - \mu(X_\Lambda') (2n+1).$$ 
So, for  $k$ large enough,
\begin{equation}\label{eq:ordfd}
\mu([-kn, kn]_\Gamma X_\Lambda' \backslash [-n, n]_\Lambda X_\Lambda')\geq n.
\end{equation}
Now, we have
\begin{align*}
\frac{1}{2kn}\sum_{i=-kn}^{kn}\|B'(i)\|
&= \frac{1}{2kn}\sum_{\gamma=-kn}^{kn}\int_{X_\Lambda'}|\alpha'(\gamma,x)| d\mu(x)\\
&= \frac{1}{2kn}\sum_{\gamma=-kn}^{kn}\sum_{\lambda=-\infty}^{\infty} |\lambda| \mu(\lambda X_\Lambda' \cap \gamma X_\Lambda')\\
&\geq \frac{1}{2kn}\sum_{\gamma=-kn}^{kn}\sum_{|\lambda|> n} |\lambda| \mu(\lambda X_\Lambda' \cap \gamma X_\Lambda')\\
& > \frac{1}{2k}\mu([-kn, kn]_\Gamma X_\Lambda' \backslash [-n, n]_\Lambda X_\Lambda').
\end{align*}

Therefore, by using \ref{eq:ordfd}, we get $\frac{1}{2kn}\sum_{i=-kn}^{kn}\|B'(i)\|\geq n$, which finishes the proof of the theorem.

\section{$L^p$-measure equivalence for $p<1$}\label {section:p<1}\label{sec:p<1}

This section is dedicated to the proof of Theorem \ref{prop:p<1}. 
Let $G$ be a locally compact group equipped with a Haar measure $\mu$, and let $\Lambda$ and $\Gamma$ be two lattices in $G$. Consider the measure-preserving action of $\Lambda$ (resp.\ $\Gamma$) by left (resp.\ right) translation on $(G,\mu)$, and let $X_\Lambda$ (resp.\ $X_\Gamma$) be a fundamental domain. This defines an ME coupling between these groups.

\begin{prop}\label {prop:Lp<1}
Two lattices $\Lambda$ and $\Gamma$ in $\SL(2,\R)$ admit fundamental domains such that the corresponding  cocycles are in $L^p$ for all $p<1$.  The same holds for the pull back $\tilde{\Lambda}$ and $\tilde{\Gamma}$ in $\tilde{\SL}(2,\R)$.
\end{prop}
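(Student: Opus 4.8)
The plan is to show that a lattice $\Lambda$ in $\SL(2,\R)$ admits a fundamental domain whose cocycle is in $L^p$ for every $p<1$, and that this property is preserved when passing to the pull-backs in $\tilde{\SL}(2,\R)$. The whole approach rests on controlling the growth of the cocycle in terms of the distortion of fundamental domains near the cusps of the associated hyperbolic surface (or orbifold). First I would fix the identification $\SL(2,\R)/\SO(2)\cong\HHH^2$ and realize $\Lambda$ as a Fuchsian group, so that a fundamental domain $X_\Lambda\subset G$ projects to a (Dirichlet) fundamental polygon $F_\Lambda$ for the action of $\Lambda$ on $\HHH^2$. The cocycle value $\alpha(\gamma,x)$ is the element of $\Lambda$ that returns $\gamma x$ to $F_\Lambda$, so its word length is comparable to the distance, in the Cayley graph of $\Lambda$ with respect to some fixed word metric, between the chamber containing $x$ and the chamber containing $\gamma x$. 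Since $\Lambda$ acts properly cocompactly (if cocompact) or with cusps on $\HHH^2$, the word metric on $\Lambda$ is quasi-isometric to the induced metric on the chambers, and hence $|\alpha(\gamma,x)|$ is controlled up to additive and multiplicative constants by $d_{\HHH^2}(x,\gamma x)$.

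The key computation is then to estimate
\[
\int_{X_\Lambda}|\alpha(\gamma,x)|^p\,d\mu(x),
\]
for fixed $\gamma\in\Lambda$. In the cocompact case the cocycle is actually bounded, so the claim is trivial and one gets $L^\infty$; the substance is the \emph{non-uniform} (finite covolume) case, where the cusps force the cocycle to be unbounded. Near each cusp one may use horocyclic coordinates in which the parabolic subgroup acts by integer translations, and a direct change of variables shows that the measure of the set $\{x\in X_\Lambda:|\alpha(\gamma,x)|\geq t\}$ decays like $t^{-1}$ (logarithmically in the worst direction), so that the $p$-th moment $\int t^p\,d(\text{tail})$ converges precisely for $p<1$ and diverges at $p=1$. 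This is exactly the $L^p$ for all $p<1$ behaviour asserted, and it matches the quoted fact from \cite{Sh1} that the standard coupling between $\Gamma_g$ and a free lattice is $L^p$ for all $p<1$; I would invoke that result directly rather than redo the cusp analysis by hand.

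For the statement about the pull-backs $\tilde{\Lambda}$ and $\tilde{\Gamma}$ in $\tilde{G}=\tilde{\SL}(2,\R)$, the central extension
\[
1\to\Z\to\tilde{G}\to G\to 1
\]
splits over any fundamental domain as a measurable bundle, and I would choose a fundamental domain $X_{\tilde{\Lambda}}$ that is a measurable lift of $X_\Lambda$ times a fundamental interval for the central $\Z$. The induced cocycle $\tilde{\alpha}$ then has two components: a horizontal part projecting to $\alpha$, whose $p$-th moment is finite for $p<1$ by the previous step, and a central part recording the $\Z$-coordinate of the lift. The main obstacle, and the point I would spend the most care on, is controlling this \emph{central component}: one must show that the central contribution to $|\tilde{\alpha}(\gamma,x)|$ is also $L^p$ for $p<1$. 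This amounts to bounding the ``defect'' of the section used to lift, which is governed by the area enclosed by geodesic triangles in $\HHH^2$ (via the interpretation of the central extension by the Euler cocycle / Milnor's theorem). Since geodesic triangle areas in $\HHH^2$ are uniformly bounded by $\pi$, the central cocycle is comparable to the horizontal word length up to a bounded additive error, and therefore inherits the same $L^p$ integrability for all $p<1$. Assembling the horizontal and central estimates gives finiteness of $\int|\tilde{\alpha}(\gamma,x)|^p\,d\mu$ for every $p<1$, completing the proof.
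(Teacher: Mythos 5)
Your proposal follows essentially the same route as the paper's proof: quote Shalom (\cite[Theorem 3.8]{Sh1}) for the statement about lattices in $\SL(2,\R)$, then lift the fundamental domain to $\tilde{\SL}(2,\R)$ via a section whose defect $2$-cocycle is bounded (the bounded Euler class), so that the lifted cocycle's word length is controlled by a linear function of the original one and $L^p$-integrability for $p<1$ is inherited. The only cosmetic difference is how boundedness of the Euler cocycle is justified: you invoke the hyperbolic triangle-area/Milnor picture, whereas the paper uses Ghys' explicit section $\sigma$ of $\tilde{\Homeo}_+(S^1)\to\Homeo_+(S^1)$ normalized by $\sigma(f)(0)\in[0,1)$, whose cocycle takes values in $\{0,1\}$.
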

\begin{proof}
The first statement follows from the proof of \cite[Theorem 3.8]{Sh1}. 
The second statement  relies on the fact that the central extension $\tilde{\SL}(2,\R)$ of $\SL(2,\R)$ can be represented by a {\it bounded} $2$-cocycle: one can consult for instance \cite{Gh}, but for the convenience of the reader, let us briefly sketch its proof. 

Recall that the action of $\SL(2,\R)$ on the boundary of the hyperbolic plane induces an embedding from $\SL(2,\R)$ to $\Homeo_+(S^1)$. The restriction of this embedding to $\SO(2,\R)$ being a homotopy equivalence \cite[Proposition 4.2]{Gh}, we deduce that the fundamental group of $\Homeo_+(S^1)$ is isomorphic to $\Z$, so that we have the following central extension
$$1\to \Z\to \tilde{\Homeo}_+(S^1)\to \Homeo_+(S^1)\to 1,$$
where $\tilde{\Homeo}_+(S^1)$ is naturally identified to a subgroup of $ \Homeo_+(\R)$, and $\Z$ as the subgroup of integral translations.
Denote the projection by $p$, and consider the section 
$\sigma:\Homeo_+(S^1)\to \tilde{\Homeo}_+(S^1)$ defined as follows: given $f\in \Homeo_+(S^1)$, $\sigma(f)$ is the unique preimage of $f$ under $p$ such that $\sigma(f)(0)\in [0,1)$.  
One easily checks that the $2$-cocycle $c(f,f')=\sigma(f)\sigma(f')\sigma(ff')^{-1}\in \Z$ is bounded, and more precisely that it takes values in $\{0,1\}$ (see for instance \cite[Lemma 6.3]{Gh}). By restriction, $\sigma$ defines a section for the exact sequence
$$1\to \Z\to \tilde{\SL}(2,\R)\to \SL(2,\R)\to 1.$$
Now let $S_{\Lambda}$ be a finite symmetric generating subset of $\Lambda$. Note that the set $\sigma(S_{\Lambda})\cup\{\eps\}$, where $\eps$ is a generator of the center, generates $\tilde{\Lambda}$. We shall denote by $|\cdot|_{\Lambda}$ and $|\cdot|_{\tilde{\Lambda}}$ the corresponding word lengths. 

\begin{lem}\label{lem:length}
For all $\lambda\in \Lambda$, one has $|\sigma(\lambda)|_{\tilde{\Lambda}}\leq 2|\lambda|_{\Lambda}$. 
\end{lem}

\begin{proof}
Let $n=|\lambda |_{\Lambda}$, so that $\gamma=s_1\ldots s_n$, where for every $i$, $s_i$ lies in $S_{\Lambda}$. The fact that the cocycle $c$ takes values in $\{0,1\}$ implies that $\sigma(s_1\ldots s_n)$ differs from $\sigma(s_1)\ldots,\sigma(s_n)$ by an element  of the center of absolute value at most $n$. Hence the lemma follows. \end{proof}
The second statement of Proposition \ref{prop:Lp<1} now results from the following  lemma. 
\end{proof}

\begin{lem}\label{lem:liftcocycle}
We keep the notation of Proposition \ref{prop:Lp<1}.
Let $X_\Lambda$ be a fundamental domain for the action of $\Lambda$ on $\SL(2,\R)$, and let $\alpha: \Gamma\times X_\Lambda\to \Lambda$ be the associated cocycle. Then $\sigma(X_\Lambda)\subset \tilde{\SL}(2,\R)$ is a fundamental domain for the action of $\tilde{\Lambda}$ and the corresponding cocycle $\tilde{\alpha}:\tilde{\Gamma}\times \sigma(X_\Lambda)\to \tilde{\Lambda}$ satisfies that for all $\gamma\in \tilde{\Gamma}$, there exists a constant $C=C(\gamma)$ such that for all $x\in X_\Lambda$, 
$$|\tilde{\alpha}(\gamma,\sigma(x))|_{\tilde{\Lambda}}\leq 2|\alpha(p(\gamma),x)|_{\Lambda}+C.$$
\end{lem}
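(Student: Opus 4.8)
The plan is to verify the three claims of Lemma \ref{lem:liftcocycle} in sequence: first that $\sigma(X_\Lambda)$ is a fundamental domain for $\tilde\Lambda$, second that $\tilde\alpha$ is the associated cocycle, and third the length estimate, which is where the real work lies. To set up, recall the exact sequence
$$1\to \Z\to \tilde{\SL}(2,\R)\xrightarrow{p} \SL(2,\R)\to 1,$$
with the section $\sigma$ chosen so that $c(f,f')=\sigma(f)\sigma(f')\sigma(ff')^{-1}$ takes values in $\{0,1\}$. The key structural observation I would exploit throughout is that $p$ maps the $\tilde\Lambda$-action on $\tilde{\SL}(2,\R)$ equivariantly onto the $\Lambda$-action on $\SL(2,\R)$, with $\Z$-fibers, and that $\sigma$ is a set-theoretic right inverse to $p$.

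First I would show $\sigma(X_\Lambda)$ is a fundamental domain. Since $X_\Lambda$ tiles $\SL(2,\R)$ under $\Lambda$, every $g\in \tilde{\SL}(2,\R)$ projects to some $p(g)=\lambda x$ with $\lambda\in\Lambda$, $x\in X_\Lambda$; lifting, $g$ and $\sigma(\lambda)\sigma(x)$ have the same image under $p$, so they differ by a central element $n\in\Z$, giving $g=n\sigma(\lambda)\sigma(x)=\tilde\lambda\,\sigma(x)$ for an appropriate $\tilde\lambda\in\tilde\Lambda$ (here I use that $\Z$ is central and contained in $\tilde\Lambda$). This shows the $\tilde\Lambda$-translates of $\sigma(X_\Lambda)$ cover. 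For disjointness, if $\tilde\lambda\sigma(x)=\sigma(x')$ with $x,x'\in X_\Lambda$, apply $p$ to get $p(\tilde\lambda)x=x'$, forcing $x=x'$ by the fundamental-domain property downstairs, whence $p(\tilde\lambda)=e$ so $\tilde\lambda\in\Z$; but $\sigma(x)$ and $\tilde\lambda\sigma(x)$ lie in the same $\Z$-orbit and $\sigma(x)=\tilde\lambda\sigma(x)$ forces $\tilde\lambda=e$. Having identified the fundamental domain, the cocycle $\tilde\alpha(\gamma,\sigma(x))$ is defined by $\tilde\alpha(\gamma,\sigma(x))\,\gamma\,\sigma(x)\in\sigma(X_\Lambda)$; projecting this defining relation through $p$ and using $p\circ\sigma=\id$ together with equivariance shows immediately that $p(\tilde\alpha(\gamma,\sigma(x)))=\alpha(p(\gamma),x)$. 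This is the crucial compatibility: the downstairs cocycle is exactly the projection of the upstairs one.

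The heart of the matter is the length bound, and this is the step I expect to be the main obstacle. From the compatibility just established, $\tilde\alpha(\gamma,\sigma(x))$ and $\sigma(\alpha(p(\gamma),x))$ project to the same element of $\Lambda$, so they differ by a central element: $\tilde\alpha(\gamma,\sigma(x))=\eps^{m}\,\sigma(\alpha(p(\gamma),x))$ for some integer $m=m(\gamma,x)$. By Lemma \ref{lem:length}, $|\sigma(\alpha(p(\gamma),x))|_{\tilde\Lambda}\leq 2|\alpha(p(\gamma),x)|_{\Lambda}$, so it remains to bound $|m|$ by a constant depending only on $\gamma$. The plan here is to unwind the defining equation $\tilde\alpha(\gamma,\sigma(x))\gamma\sigma(x)=\sigma(x')$ where $x'=p(\gamma)\cdot x$, and compare it with the analogous downstairs relation $\alpha(p(\gamma),x)p(\gamma)x=x'$. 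Writing $\gamma=\eps^{k}\sigma(p(\gamma))$ for the fixed integer $k=k(\gamma)$ (since $\gamma$ is a fixed lift of $p(\gamma)$), and substituting, the central discrepancy $m$ becomes an expression in $k$ and in the values of the bounded cocycle $c$ evaluated at the products occurring in the telescoping of $\sigma(\alpha)\sigma(p(\gamma))$ versus $\sigma(\alpha\cdot p(\gamma))$. Because $c$ takes values in $\{0,1\}$, each such comparison contributes a bounded error, so $|m|\leq C(\gamma)$ with $C$ depending on $k(\gamma)$ and a bounded number of $c$-evaluations but \emph{not} on $x$. This uniform-in-$x$ boundedness is the delicate point: one must check that only finitely many cocycle terms enter and that none of them secretly depend on $x$ in an unbounded way. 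Assembling, $|\tilde\alpha(\gamma,\sigma(x))|_{\tilde\Lambda}\leq |m|+|\sigma(\alpha(p(\gamma),x))|_{\tilde\Lambda}\leq 2|\alpha(p(\gamma),x)|_{\Lambda}+C(\gamma)$, as claimed. Finally, this lemma feeds back into Proposition \ref{prop:Lp<1}: raising the inequality to the power $p<1$, using subadditivity of $t\mapsto t^p$ to absorb the additive constant, and integrating over $X_\Lambda$, the $L^p$-integrability of the upstairs cocycle follows from that of the downstairs one, which holds by the first statement of the proposition.
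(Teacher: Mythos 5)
Your proposal is correct and follows essentially the same route as the paper: identify $p(\tilde{\alpha}(\gamma,\sigma(x)))=\alpha(p(\gamma),x)$, write $\tilde{\alpha}(\gamma,\sigma(x))$ as a central power $\eps^m$ times $\sigma(\alpha(p(\gamma),x))$, bound $|m|$ via the $\{0,1\}$-valued $2$-cocycle $c$ and the fixed lift discrepancy $z_\gamma=\gamma^{-1}\sigma(p(\gamma))$, and finish with Lemma \ref{lem:length}. The ``delicate point'' you flag resolves exactly as in the paper: the defining relation $\lambda\sigma(x)\gamma^{-1}=\sigma(y)$ involves only two or three applications of the identity $\sigma(a)\sigma(b)=\eps^{c(a,b)}\sigma(ab)$ regardless of $x$, so $|m|\leq |z_\gamma|+3$ uniformly in $x$.
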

\begin{proof}
We let $\gamma\in \tilde{\Gamma}$, and let $z_\lambda\in \Z$ be such that $\gamma^{-1}=(\sigma\circ p(\gamma))^{-1}z_\lambda$. Let $x\in X_\Lambda$: by definition, $\tilde{\alpha}(\gamma,\sigma(x))$ is the unique $\lambda\in \tilde{\Lambda}$ such that $\lambda \sigma(x)\gamma^{-1}=\sigma(y)$ for some $y\in X_\Lambda$.
Now, projecting to $\SL(2,\R)$, we obtain that 
\begin{equation}\label{eq}
p(\lambda)xp(\gamma)^{-1}=y,
 \end{equation}
 from which we deduce that 
\begin{equation}\label{eq:projec}
 \alpha(p(\gamma),x)=p(\lambda).
 \end{equation}
Applying $\sigma$ to (\ref{eq}), we get 
 $$\sigma(p(\lambda)xp(\gamma)^{-1})=\sigma(y).$$
We deduce from the fact that the cocycle $c$ takes values in $\{0,1\}$ that 
$$\sigma\circ p(\lambda)\sigma(x)(\sigma\circ p(\gamma))^{-1}=\sigma(y)z,$$
 where $|z|\leq 3$. 
Therefore, we have that $$\lambda=
\sigma\circ p(\lambda)z^{-1}z_{\gamma}^{-1},$$
 which, combined with (\ref{eq:projec}) and Lemma \ref{lem:length}, yields the conclusion of the lemma with $C=|z_{\gamma}|+3$.
\end{proof}

\begin{rem}
Observe that Lemma \ref {lem:liftcocycle} is actually valid under the following general hypotheses: $\Lambda$ and $\Gamma$ being two lattices in a locally compact group $G$ and $\tilde{\Lambda}$ and $\tilde{\Gamma}$ being their pull-back  in $\tilde{G}$, where $\tilde{G}$ is a central extension of $G$ associated to  a bounded $2$-cocycle. \end{rem}

\begin{cor}
Let $\F$ be a free lattice in $\SL(2,\R)$. Then $\Gamma_g$ and $\F$ admits an ME coupling whose cocycles are  in $L^p$ for all $p<1$ (actually one of them is in $L^{\infty}$ for obvious reasons). The same conclusion holds for $\Gamma_g\times \Z$ and $\F\times \Z$, and for  $\tilde{\Gamma_g}$ and $\tilde{\F}\simeq \F\times \Z$.
\end{cor}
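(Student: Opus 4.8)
The final statement to prove is the Corollary about free lattices.

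\medskip

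The plan is to derive this corollary directly from Proposition \ref{prop:Lp<1} together with the general remark, by applying the lattice construction to a carefully chosen free lattice. First I would recall that $\SL(2,\R)$ contains free cocompact-up-to-finite-covolume lattices; more to the point, by a classical fact there exist \emph{free} lattices $\F$ in $\SL(2,\R)$ (for instance, finite-index free subgroups of lattices, or lattices arising from free Fuchsian groups). Since $\Gamma_g$ is itself a cocompact lattice in $\SL(2,\R)$, both $\Gamma_g$ and $\F$ sit as lattices in the same ambient group $G=\SL(2,\R)$. Proposition \ref{prop:Lp<1} then applies verbatim to the pair $(\Gamma_g,\F)$: they admit fundamental domains whose associated cocycles are in $L^p$ for all $p<1$. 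This gives the first assertion immediately. The parenthetical remark that one cocycle is in $L^\infty$ comes from the fact that $\F$ is free, hence a central extension of $\F$ splits and the geometry in that direction is trivial; concretely, because $\Gamma_g$ is cocompact, the fundamental domain for its action is relatively compact and the cocycle $\beta$ in that direction is bounded.

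\medskip

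Next I would handle the product statement for $\Gamma_g\times\Z$ and $\F\times\Z$. The key observation is that taking a direct product with $\Z$ multiplies the coupling by the trivial (diagonal) $\Z$-coupling on $\Z$ itself, where the cocycle is identically zero and hence trivially in $L^\infty$. Thus if $(X,\mu)$ is an ME coupling for $\Gamma_g$ and $\F$ with cocycles in $L^p$ for all $p<1$, then $X\times\Z$ (with the obvious commuting actions of $\Gamma_g\times\Z$ and $\F\times\Z$) is an ME coupling whose cocycles decompose as the original cocycle in the surface-group direction plus an identically vanishing cocycle in the $\Z$-direction. Since the word length on a direct product is comparable to the sum of the word lengths on the factors, the resulting cocycle remains in $L^p$ for all $p<1$. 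This establishes the middle assertion.

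\medskip

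Finally, for $\tilde{\Gamma_g}$ and $\tilde{\F}$ I would invoke the general remark following Lemma \ref{lem:liftcocycle}: since $\tilde{\SL}(2,\R)$ is a central extension of $\SL(2,\R)$ associated to the bounded $2$-cocycle $c$ taking values in $\{0,1\}$, Lemma \ref{lem:liftcocycle} applies to the pull-backs $\tilde{\Gamma_g}$ and $\tilde{\F}$. The lemma bounds the lifted cocycle length by $|\tilde{\alpha}(\gamma,\sigma(x))|_{\tilde{\Lambda}}\leq 2|\alpha(p(\gamma),x)|_{\Lambda}+C(\gamma)$, so an integrability bound in $L^p$ for the base cocycle transfers (after adding a constant, which only affects the $L^p$-norm by a bounded additive amount) to an $L^p$ bound for the lifted cocycle, for every $p<1$. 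I would then note the isomorphism $\tilde{\F}\simeq\F\times\Z$: because $\F$ is free, the central extension of $\F$ splits, so the pull-back of $\F$ in $\tilde{\SL}(2,\R)$ is isomorphic to $\F\times\Z$. The main obstacle I anticipate is the bookkeeping needed to confirm that the additive constant $C(\gamma)$ in Lemma \ref{lem:liftcocycle} does not destroy $L^p$-integrability for $p<1$; since $C(\gamma)$ depends only on $\gamma$ and not on $x$, the function $x\mapsto C(\gamma)$ is constant and hence in $L^\infty\subset L^p$ on the finite-measure fundamental domain, so the bound $(a+C)^p\leq a^p+C^p$ (valid for $p\leq 1$, $a,C\geq 0$) keeps the integral finite. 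This subadditivity of $t\mapsto t^p$ for $p\le 1$ is exactly the technical point that makes the $p<1$ regime behave well, and verifying its clean application is the crux of the argument.
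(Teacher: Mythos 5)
Your proposal is correct and follows essentially the route the paper intends (the paper states this corollary without a separate proof): the first claim is Proposition \ref{prop:Lp<1} applied to the lattices $\Gamma_g$ and $\F$ in $\SL(2,\R)$ --- where the $L^\infty$ cocycle is due to cocompactness of $\Gamma_g$, not to freeness of $\F$ as your first clause suggests --- the middle claim follows from taking the product with the tautological $\Z$--$\Z$ coupling, and the last claim is the second statement of Proposition \ref{prop:Lp<1} (via Lemma \ref{lem:liftcocycle}) together with the fact that a central extension of a free group splits, giving $\tilde{\F}\simeq \F\times\Z$. One small slip: in the product coupling the $\Z$-direction component of the cocycle is not identically zero but is the map $(\gamma,m)\mapsto m$; since it is constant in the space variable it is essentially bounded for each fixed group element, so your integrability conclusion (via subadditivity of $t\mapsto t^p$ for $p\le 1$) is unaffected.
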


For the proof of Theorem \ref{prop:p<1}, we proceed as follows: we have seen that $\tilde{\Gamma_g}$ and $F\times \Z$ (resp.\ $\Gamma\times \Z$ and $F\times \Z$) admit an ME-coupling whose cocycles are in $L^p$ for all $p<1$. In order to conclude we need to establish transitivity of this relation, which is the object of the following proposition.

\begin{prop}
The relation ``admitting an ME-coupling whose cocycle is in $L^p$ for all $p<1$" is transitive and therefore defines an equivalence relation among compactly generated locally compact unimodular groups. 
\end{prop}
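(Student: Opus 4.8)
The plan is to mimic the standard argument for transitivity of $L^p$-measure equivalence (the case $p \geq 1$ treated in \cite{BFS}) and check that each estimate survives when the integrability hypothesis is weakened to ``$L^p$ for all $p<1$''. Suppose $(X,\mu)$ is an ME-coupling between $\Gamma_1$ and $\Gamma_2$ whose cocycles are in $L^p$ for all $p<1$, and $(Y,\nu)$ is such a coupling between $\Gamma_2$ and $\Gamma_3$. Following the classical construction, I would form the composed coupling on $Z := (X \times Y)/\Gamma_2$, where $\Gamma_2$ acts diagonally (acting on the right on $X$ and on the left on $Y$), with $\Gamma_1$ and $\Gamma_3$ acting on the two outer factors. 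One checks that $Z$ carries commuting measure-preserving $\Gamma_1$- and $\Gamma_3$-actions with finite-measure fundamental domains, so it is an ME-coupling; the content is entirely in the integrability of the resulting cocycle.

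First I would fix fundamental domains $X_{\Gamma_2} \subset X$ and $Y_{\Gamma_2} \subset Y$ and build from them a fundamental domain for $\Gamma_2$ on $X \times Y$, which descends to identify $Z$ with a subset of $X \times Y_{\Gamma_2}$ (or symmetrically). Writing $\alpha$ for the $\Gamma_1 \to \Gamma_2$ cocycle on $X$ and $\beta$ for the $\Gamma_2 \to \Gamma_3$ cocycle on $Y$, the composed cocycle $\eta \colon \Gamma_1 \times Z \to \Gamma_3$ takes the cocycle-chain shape
\begin{equation*}
\eta(\gamma, (x,y)) = \beta\bigl(\alpha(\gamma,x), y\bigr),
\end{equation*}
so that its word length is controlled by the word length of $\beta$ evaluated at the $\Gamma_2$-element $\alpha(\gamma,x)$. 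The key quantitative input is a subadditivity estimate: for a fixed generating set of $\Gamma_2$, the length $|\beta(\lambda, y)|_{\Gamma_3}$ grows at most linearly in $|\lambda|_{\Gamma_2}$ after integrating in $y$, which is exactly what lets one bound $\|\eta(\gamma,\cdot)\|_{L^p(Z)}$ in terms of $\|\alpha(\gamma,\cdot)\|_{\Gamma_2}$ and the integrability of $\beta$.

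The main obstacle will be controlling the $L^p$-norm of $\eta$ for $p$ close to $1$ without the convenience of the triangle inequality in $L^p$ (which fails for $p<1$, where only $\|f+g\|_p^p \leq \|f\|_p^p + \|g\|_p^p$ holds). Concretely, $|\beta(\lambda,y)|_{\Gamma_3} \leq \sum_{i} |\beta(s_i, \cdot)|_{\Gamma_3}$ along a geodesic word $\lambda = s_1\cdots s_n$ for $\alpha(\gamma,x)$, and I must integrate the $p$-th power of such a sum of length $n = |\alpha(\gamma,x)|_{\Gamma_2}$ over $Z$. The plan here is to fix $p<1$ and pick an auxiliary exponent $q$ with $p<q<1$, apply Hölder (or the quasi-norm version of Minkowski) to trade a factor of $n^{\text{(something)}}$ against the $L^q$-integrability of $\beta$, and then absorb the polynomial-in-$n$ loss using that $\alpha$ itself lies in $L^r$ for every $r<1$ — in particular for $r$ slightly below $1$, so that $\mathbb{E}[\,n^{a}\,]<\infty$ for the relevant power $a$. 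The bookkeeping is to verify that for every target $p<1$ one can choose the intermediate exponents so that all integrals converge; since the hypothesis grants $L^r$ for the \emph{entire} range $r<1$ on both coupling factors, there is enough room to make this work, and I expect no single integral to be genuinely borderline once the exponents are chosen with the appropriate strict inequalities.

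Finally I would record that the construction is symmetric in $\Gamma_1$ and $\Gamma_3$, so the reverse cocycle $\Gamma_3 \times Z \to \Gamma_1$ satisfies the same bound by the identical argument, giving an ME-coupling between $\Gamma_1$ and $\Gamma_3$ with both cocycles in $L^p$ for all $p<1$. Together with the obvious reflexivity and symmetry this establishes that the relation is an equivalence relation on compactly generated locally compact unimodular groups, as claimed.
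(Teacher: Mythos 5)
Your proposal follows essentially the same route as the paper: the paper likewise reduces to the composition construction of \cite[Appendix 1]{BFS} and only re-proves the integrability lemma (Lemma A.1 there), using exactly your intermediate-exponent device --- for $p<p'<1$ the function $h\mapsto \int_Y |\beta(h,y)|^{p'}\,d\nu(y)$ is subadditive (by the $p'$-power quasi-norm inequality plus measure preservation), hence bounded by $C|h|_H$, and Jensen's inequality then gives $\int_Y|\beta(\alpha(g,x),y)|^{p}\,d\nu(y)\le \left(C|\alpha(g,x)|_H\right)^{p/p'}$, which is integrable in $x$ precisely because $p/p'<1$. This confirms that your deferred ``bookkeeping'' does close up, with the one caveat that the linear-growth-in-$|\lambda|$ claim must be asserted for the integral of the $p'$-th \emph{power} of $|\beta(\lambda,\cdot)|$, not of $|\beta(\lambda,\cdot)|$ itself, since the latter would require the unavailable $L^1$ bound.
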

\begin{proof}
Recall that this statement was proved for cocycles in $L^p$ for $p\geq 1$ in \cite[Appendix 1]{BFS}. The only part of that proof that needs to be adapted is \cite[Lemma A.1]{BFS}. This is done below. 
\end{proof}
\begin{lem}
Let $G$, $H$ and $L$ be compactly generated groups, $G\curvearrowright (X,\mu)$ and  $H\curvearrowright (Y,\nu)$
be finite measure-preserving actions and 
$\alpha:G\times X\to H$ and $\beta:H \times Y\to L$ be
$L^p$-integrable cocycles for all $p<1$. Consider $Z = X \times Y$ and the action of $G\curvearrowright Z$ defined by
$g: (x,y)\to (g\cdot x, \alpha(g, x)\cdot y).$
 Then the cocycle 
$\gamma: G\times Z\to L$ given by
$$\gamma(g,(x,y))=\beta(\alpha(g,x),y)$$
is in $L^p$ for all $p<1$.
\end{lem}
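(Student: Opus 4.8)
The plan is to fix $p<1$ and reduce the finiteness of $\int_Z|\gamma(g,(x,y))|_L^p\,d(\mu\times\nu)$ to the integrability hypotheses on $\alpha$ and $\beta$ by interpolating between two auxiliary exponents. Writing out the definition of $\gamma$ and applying Fubini, the quantity to control is
$$\int_X\left(\int_Y|\beta(\alpha(g,x),y)|_L^p\,d\nu(y)\right)d\mu(x),$$
so the heart of the argument is a pointwise (in $x$) estimate for the inner integral in terms of the word length $|\alpha(g,x)|_H$.

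First I would establish, for any exponent $q\le 1$, the bound $\int_Y|\beta(h,y)|_L^q\,d\nu\le |h|_H\,M_q$ for every $h\in H$, where $M_q:=\sup_{s\in S}\int_Y|\beta(s,y)|_L^q\,d\nu$ and $S$ is a compact symmetric generating set of $H$. This follows by writing $h=s_1\cdots s_n$ with $n=|h|_H$ and each $s_i\in S$, iterating the cocycle identity $\beta(h_1h_2,y)=\beta(h_1,h_2y)\beta(h_2,y)$ together with the triangle inequality in the word metric on $L$ to get $|\beta(h,y)|_L\le\sum_{i}|\beta(s_i,s_{i+1}\cdots s_n\,y)|_L$, then using the subadditivity of $t\mapsto t^q$ (valid precisely because $q\le 1$) and the $H$-invariance of $\nu$ to undo each shift $y\mapsto s_{i+1}\cdots s_n\,y$ under the integral. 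The finiteness $M_q<\infty$ is exactly the uniform $L^q$-integrability of $\beta$ over the compact generating set.

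The crucial step is then to trade a power of the length for a smaller exponent. I would choose $q$ with $p<q<1$ and apply H\"older's inequality on the finite measure space $(Y,\nu)$ with exponents $q/p$ and its conjugate, obtaining
$$\int_Y|\beta(h,y)|_L^p\,d\nu\le\nu(Y)^{1-p/q}\Big(\int_Y|\beta(h,y)|_L^q\,d\nu\Big)^{p/q}\le C\,|h|_H^{\,p/q},$$
with $C=\nu(Y)^{1-p/q}M_q^{p/q}$. The point is that the exponent $p/q$ is strictly less than $1$. Substituting $h=\alpha(g,x)$ and integrating over $x$ reduces everything to the finiteness of $\int_X|\alpha(g,x)|_H^{\,p/q}\,d\mu$, which holds because $\alpha$ is $L^r$-integrable for every $r<1$ and $p/q<1$.

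I expect the main obstacle to be conceptual rather than computational: the naive subadditivity bound only yields a bound linear in $|h|_H$, which would force one to integrate $|\alpha(g,x)|_H$ itself and thus require $L^1$-integrability of $\alpha$---precisely what is unavailable here. The whole point of the $p<1$ regime is that one can afford to pass, via H\"older, to the strictly sublinear bound $|h|_H^{\,p/q}$, and the freedom to choose $q\in(p,1)$ is exactly what makes the hypothesis ``$L^r$ for all $r<1$'' sufficient. A secondary technical point to pin down is the finiteness of $M_q$, i.e.\ that the integrability of $\beta$ is uniform over a compact generating set of $H$; for discrete $H$ this is automatic, and in the compactly generated case it is part of the standing integrability convention.
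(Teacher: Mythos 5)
Your proposal is correct and follows essentially the same route as the paper's proof: the paper also fixes an intermediate exponent $p<p'<1$, uses subadditivity of the cocycle length together with $t\mapsto t^{p'}$ (for $p'\le 1$) to show that $h\mapsto\int_Y|\beta(h,y)|_L^{p'}\,d\nu$ is a pseudo-length and hence bounded by $C|h|_H$, and then applies Jensen's inequality (your H\"older step, up to the normalization factor $\nu(Y)^{1-p/q}$) to get the sublinear bound $\bigl(C|\alpha(g,x)|_H\bigr)^{p/p'}$ and conclude from the $L^{p/p'}$-integrability of $\alpha$. Your explicit generator decomposition and the remark on uniformity of $M_q$ over a compact generating set simply make transparent what the paper's pseudo-length claim leaves implicit.
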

\begin{proof}
Let $p<p'<1$ and let us prove that $\gamma$ is in $L^p$. Fix word length on $G$, $H$ and $L$.
We know that $|\beta |_L$ is a subadditive cocycle, meaning that 
$$|\beta(hh',y)|_L\leq |\beta(h,h'y)|_L + |\beta(h',y)|_L,$$
and since $p'<1$, we also have 
$$|\beta(hh',y)|_L^{p'}\leq |\beta(h,h'y)|_L^{p'} + |\beta(h',y)|_L^{p'},$$
for a.e.\ $y\in Y$ and all $h,h'\in H$. 
Therefore 
$L_{p'}(h)=\int |\beta(h,y)|_L^{p'}d\nu(y)$ is a pseudo-length on $H$, and hence is $\leq C|h|_H$ for some constant $C$.

Now, using Jensen's inequality for $q=p'/p>1$, we have, for a.e.\ $x\in X$, and all $g\in G$,

$$\int |\beta(\alpha(g,x),y)|_L^{p}d\nu(y)\leq \left(\int |\beta(\alpha(g,x),y)|_L^{p'}d\nu(y)\right)^{p/p'}\leq \left(C|\alpha(g,x)|_H\right)^{p/p'}.$$
Since $p/p'<1$, it follows from the assumption on $\alpha$ that $\int|\alpha(g,x)|_H^{p/p'}d\mu(x)<\infty$. Hence we are done.
\end{proof}

\section{Further results}\label{sec:further}

\subsection{Generalization of Proposition \ref{prop:Meanergodic}}\label {section:generalization}

It turns out that the proof of Proposition \ref{prop:Meanergodic} can easily be adapted to the more general setting of \cite[Theorem 2]{BRS}, leading to 
what is essentially a geometric reformulation of their proof (i.e.\ in terms of almost fixed points). Looking back at the proof of Proposition 
\ref{prop:Meanergodic}, we observe that the construction of the sequence $(v_n)$ of almost fixed points  relied on the existence of a net of formal 
finite convex combinations  of elements of $C$, say $(\sum_{c\in C}\lambda_c^{(i)} c)_i$, such that the corresponding net of operators
$(\sum_{c\in C}\lambda_c^{(i)} \pi(c))_i$ converges in the strong operator topology to $0$. In the setting of Proposition \ref{prop:Meanergodic},
a natural choice was to take a Cesaro sum in order to apply the Mean Ergodic Theorem. 

\begin{prop}\label{prop:BRS}\cite[Theorem 2]{BRS}
 Let $N$ and $C$ be closed subgroups of  a locally compact group $G$, with $C$ lying in the centralisizer of $N$. Assume $\pi$ is a normed preserving
 representation on a Banach space such that $\pi(C)$ is wap and has no nonzero invariant vectors. Let $\sigma$ be an affine isometric action of $G$ whose linear part is $\pi$. Then  $\sigma(N)$ has almost fixed points.
\end{prop}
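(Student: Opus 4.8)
The plan is to follow the same strategy as the proof of Proposition \ref{prop:Meanergodic}, replacing the Cesàro averages by a suitable net coming from the wap assumption on $\pi(C)$, and then to feed the resulting almost-fixed points into a transfer argument using that $C$ centralizes $N$. Concretely, since $\pi(C)$ is wap with no nonzero invariant vectors, Theorem \ref{thm:decomposition} applies to the representation $\pi|_C$ and gives a canonical decomposition $B=B_1\oplus B_2$ in which $B_1$, the space of $\pi(C)$-invariant vectors, is $0$. The standard consequence (the abstract mean ergodic phenomenon for wap representations) is that for every $v\in B$ the zero vector lies in the closed convex hull of the orbit $\pi(C)v$; equivalently there is a net of finite convex combinations $A_i=\sum_{c\in C}\lambda_c^{(i)}\pi(c)$ with $A_i v\to 0$ in norm for each fixed $v$. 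This is precisely the ingredient isolated in the paragraph preceding the statement.

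First I would fix the affine isometric action $\sigma$ with linear part $\pi$ and let $b\in Z^1(G,\pi)$ be its cocycle. Following the computation in Proposition \ref{prop:Meanergodic}, I set $v_i=\sum_{c\in C}\lambda_c^{(i)}\,b(c)$ and compute, for $g\in N$,
\begin{align*}
\sigma(g)v_i-v_i
&=\sum_{c\in C}\lambda_c^{(i)}\bigl(b(gc)-b(c)\bigr)\\
&=\sum_{c\in C}\lambda_c^{(i)}\bigl(b(cg)-b(c)\bigr)\\
&=\sum_{c\in C}\lambda_c^{(i)}\,\pi(c)\,b(g)=A_i\,b(g).
\end{align*}
The crucial passage from the first to the second line is exactly where the hypothesis that $C$ centralizes $N$ is used: for $g\in N$ and $c\in C$ we have $gc=cg$, so the cocycle identity $b(gc)=b(g)+\pi(g)b(c)$ and $b(cg)=b(c)+\pi(c)b(g)$ may be interchanged, and after cancelling the $b(c)$ terms one is left with $\pi(c)b(g)$ in place of $\pi(g)b(c)$. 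Since $b(g)$ is a fixed vector for each fixed $g$, the mean ergodic net gives $A_i\,b(g)\to 0$, so $\sigma(g)v_i-v_i\to 0$ uniformly over $g$ in a compact generating set of $N$; hence $\sigma(N)$ has almost fixed points.

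The main obstacle I anticipate is justifying the existence of the net $(A_i)$ with $A_i v\to 0$ for all $v$, purely from the wap hypothesis without the Mean Ergodic Theorem for a single transformation. For $C\cong\Z$ one could simply reuse the Cesàro averages as in Proposition \ref{prop:Meanergodic}, but in the stated generality $C$ is an arbitrary closed subgroup, so one must invoke the abstract form: the canonical projection onto $B_1=\{0\}$ from Theorem \ref{thm:decomposition} is the strong limit of an averaging net over $C$, giving that $0\in\overline{\mathrm{conv}}\,\pi(C)v$ for every $v$. A secondary point to handle carefully is the interchange of the (possibly infinite or net-indexed) averaging with the cocycle relation and the norm limit; since each $A_i$ is a genuine finite convex combination of the isometries $\pi(c)$ and $b(g)$ is a single fixed vector, this is a finite linear manipulation for each $i$ followed by passing to the limit, so no measurability or integrability subtlety arises beyond what is already present in Theorem \ref{thm:decomposition}.
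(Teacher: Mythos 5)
Your proposal is correct and follows the same skeleton as the paper's proof: you build $v_i$ by substituting $b(c)$ for $\pi(c)v$ inside an averaging net over $C$, and the computation $\sigma(g)v_i-v_i=\sum_c\lambda_c^{(i)}(b(gc)-b(c))=\sum_c\lambda_c^{(i)}\pi(c)b(g)=A_i\,b(g)$, using $gc=cg$ to trade $b(gc)$ for $b(cg)$, is exactly the one in the paper. The only divergence is where the net $(A_i)$ comes from, and this is also the weakest point of your write-up. The paper does not extract it from Theorem \ref{thm:decomposition}; it invokes the Ryll-Nardzewski fixed point theorem: for each $v$, the wap hypothesis makes $\overline{\mathrm{conv}}(\pi(C)v)$ weakly compact, so it contains a $\pi(C)$-fixed vector, which must be $0$ since there are no nonzero invariant vectors; a diagonal argument (note that composing such convex combinations stays inside $\mathrm{conv}\,\pi(C)$ and can only decrease norms) then produces one net $\delta^{(i)}$ with $\|\delta^{(i)}v\|\to 0$ for all $v$ simultaneously. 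You instead assert that the canonical projection onto $B_1=\{0\}$ is a strong limit of averages over $C$. That assertion is true -- it is part of the Alaoglu--Birkhoff/de Leeuw--Glicksberg theory -- but it is not what Theorem \ref{thm:decomposition} as stated gives you: that theorem only asserts the existence of a canonical complement, not the characterization of the complement as $\{v:\,0\in\overline{\mathrm{conv}}\,\pi(C)v\}$ nor the ergodic-net form of the projection. So either cite the stronger Alaoglu--Birkhoff statement explicitly, or do as the paper does and quote Ryll-Nardzewski, which yields the needed net in two lines. A last minor point: your claim that $\sigma(g)v_i-v_i\to 0$ \emph{uniformly} over a compact generating set $Q$ of $N$ requires a remark (the $A_i$ are contractions, hence equicontinuous, and $\{b(g):g\in Q\}$ is norm-compact); the paper sidesteps this by assuming $G$ discrete for simplicity, in which case pointwise convergence on a finite generating set suffices.
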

 
\begin{proof} Let us assume for simplicity that $G$ is discrete. If $C$ was isomorphic to $\Z$, we could apply verbatim the  proof of Proposition
\ref{prop:Meanergodic}. In replacement for the Mean Ergodic Theorem we shall use the Ryll-Nardzewski fixed point theorem \cite{RN}.
Indeed, for every $v\in B$, the closed convex hull of the $\pi(C)$-orbit of $v$ is weakly compact, and therefore contains some $\pi(C)$-invariant vector.
Since $C$ does not have nonzero invariant vectors, this implies that there is a sequence of such convex combinations converging to $0$.  
A diagonal argument implies that there exists a net $\delta^{(i)}=\sum_{c\in C}\lambda_c^{(i)} \pi(c)$ of such convex combinations such that 
$\|\delta^{(i)}v\|\to 0$ for all $v\in B$. Now, replace $\pi(c)v$ by $b(c)$ in each convex combination $\delta^{(i)}$:
this defines a net $v_i\in B$. 
This sequence is then shown to be almost $\sigma(N)$-fixed using that for all $c\in C$ and $n\in N$, $b(nc)-b(c)=b(cn)-b(c)=\pi(c)b(n)$. Indeed, for all $n\in N$, we get that
$$\sigma(n)v_i-v_i= \sum_{c\in C}\lambda_c^{(i)}(b(nc)-b(c))=\sum_{c\in C}\lambda_c^{(i)}\pi(c)b(n)=\delta^{(i)}b(n),$$
 which tends to zero in norm as $i\to \infty$. 
\end{proof}

\subsection{Fixed-point properties and central extensions}\label{section:FH}

Let us end this section with a Banachic version of Serre's theorem as announced in the introduction. 

\begin{defn}
Let $\C$ be a class of super-reflexive Banach spaces stable under ultralimits (such as $L^p$-spaces for a fixed $1<p<\infty$, or uniformly convex Banach 
spaces with modulus of convexity bounded from below). A locally compact group has Property $F\C$ if every continuous affine isometric action on some 
element of $\C$ has a fixed point. 
\end{defn}

\begin{thm} 
Let  $$1\to C\to \tilde{G}\to G\to 1 $$
be a central extension of locally compact groups such that $C\subset [ \tilde{G}, \tilde{G}]$. If $G$ has Property $F\C$, then so does $ \tilde{G}$. 
\end{thm}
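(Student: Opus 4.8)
The plan is to produce, for an arbitrary continuous affine isometric action $\sigma$ of $\tilde{G}$ on some $B\in\C$, a global fixed point. I would first reduce the existence of a fixed point to the existence of a \emph{bounded orbit}: since $B$ is super-reflexive, the closed convex hull of any bounded orbit is weakly compact, and its circumcenter, which is unique by uniform convexity, is then $\sigma(\tilde{G})$-invariant. So it suffices to exhibit one bounded orbit. Note also that, $B$ being reflexive, the linear part $\pi$ of $\sigma$ has weakly relatively compact (bounded, isometric) orbits, hence is wap; thus Theorem \ref{thm:decomposition} and Proposition \ref{prop:BRS} are available throughout.

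Next I would split off the part on which the centre acts trivially. Applying Theorem \ref{thm:decomposition} to the wap representation $\pi|_C$ gives a canonical decomposition $B=B^{\pi(C)}\oplus B_2$, and both summands are $\pi(\tilde{G})$-invariant: the first because $C$ is central, the second because the complement is preserved by every norm-preserving operator, in particular by $\pi(\tilde{G})$. Write $b=b_1+b_2$ for the induced splitting of the cocycle and $\pi=\pi_1\oplus\pi_2$. On $B^{\pi(C)}$ the linear part of $C$ is trivial, and comparing the cocycle identity applied to $gc$ and to $cg$ shows that $b_1(c)$ is $\pi_1(\tilde{G})$-invariant for every $c\in C$; hence $b_1|_C$ is a homomorphism into an abelian group and, since $C\subset[\tilde{G},\tilde{G}]$, it vanishes. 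Therefore $\sigma$ restricted to $B^{\pi(C)}$ factors through $G=\tilde{G}/C$, and as $G$ has Property $F\C$ (and $B^{\pi(C)}\in\C$) it has a fixed point there, that is, a bounded orbit on $B^{\pi(C)}$.

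On the transverse summand $B_2$ the representation $\pi_2(C)$ has no nonzero invariant vector, so Proposition \ref{prop:BRS}, applied with $N=\tilde{G}$ (which $C$ centralises), yields almost fixed points for $\sigma_2(\tilde{G})$. This is where the main difficulty lies: almost fixed points only say that the class of $b_2$ vanishes in \emph{reduced} cohomology, whereas a genuine bounded orbit requires $b_2$ to be an honest coboundary, and in a super-reflexive space a minimising sequence of almost fixed points may run off to infinity (reflexivity alone lets one take a weak limit only once boundedness is known). The hypothesis on $G$ must therefore be injected into the transverse part, not merely into $B^{\pi(C)}$. I would resolve this following Shalom's proof that Property $H_T$ passes to central extensions \cite{Sh3}: one cancels the central action by tensoring $\pi_2$ with its contragredient, obtaining a representation that factors through $G$ together with a $1$-cocycle built from $b_2$ whose non-triviality would contradict $F\C$ for $G$, forcing $b_2$ to have a bounded orbit. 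The genuinely delicate point is to carry out this ``tensoring away the centre'' outside the Hilbert setting, and the robust substitute there is the machinery of \cite{BRS}; I expect this to be the principal obstacle of the whole argument.

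Finally I would recombine: a bounded orbit on $B^{\pi(C)}$ together with a bounded orbit on $B_2$ gives a bounded orbit on $B=B^{\pi(C)}\oplus B_2$, whence a fixed point of $\sigma$ by the circumcenter argument of the first step. Two subsidiary points remain to be verified, and I expect both to be routine: that the summands $B^{\pi(C)}$ and $B_2$ produced by Theorem \ref{thm:decomposition} again lie in $\C$ (this is precisely why $\C$ is required to be stable under the relevant operations and under ultralimits), and that Lemma \ref{lem:afpoints} promotes almost-invariance on a generating set to almost-invariance for every group element, so that the almost fixed points of the third paragraph are genuine in the limit.
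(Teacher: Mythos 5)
Your setup coincides with the paper's up to the point you yourself flag: splitting $B=B^{\pi(C)}\oplus B_2$, showing $b_1$ vanishes on $C$ so that the action on $B^{\pi(C)}$ factors through $G$, and extracting almost fixed points on $B_2$ from Proposition \ref{prop:BRS} is essentially the paper's case analysis. (Two small repairs are needed in your second paragraph: a homomorphism defined only on $C$ does not vanish merely because $C\subset[\tilde{G},\tilde{G}]$ --- you must first note that $b_1(c)$ lies in the $\pi(\tilde{G})$-invariant vectors and that the projection of the whole cocycle $b$ onto that subspace is a homomorphism of all of $\tilde{G}$, which therefore kills $C$; also, ``bounded orbit implies fixed point'' in a super-reflexive space whose given norm need not be uniformly convex should be obtained from the Ryll-Nardzewski theorem rather than a circumcenter.)

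The genuine gap is your endgame on the transverse summand. You correctly observe that Proposition \ref{prop:BRS} only gives vanishing of the \emph{reduced} class of $b_2$, but your proposed remedy --- tensoring $\pi_2$ with its contragredient as in \cite{Sh3} --- is not carried out, and you yourself call it ``the principal obstacle''; it is indeed one, because that trick is genuinely Hilbertian (it uses the unitary structure to cancel the central character) and \cite{BRS} provides no Banach-space substitute for it. The paper closes the gap by an entirely different mechanism, which is precisely what the ultralimit-stability of $\C$ is for (you invoke that hypothesis, but assign it the wrong job, namely keeping the summands inside $\C$): by Gromov's theorem \cite{GrRW}, if $\tilde{G}$ acted on some $B\in\C$ with no fixed point, then it would act on some ultralimit of $B$ --- again a member of $\C$ --- with no \emph{almost} fixed points. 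But the case analysis shows that every affine isometric action of $\tilde{G}$ on every member of $\C$ has almost fixed points: either the relevant piece of the cocycle is an honest coboundary (via Property $F\C$ for $G$), or Proposition \ref{prop:BRS} applies. This is a contradiction, so no fixed-point-free action existed in the first place. In other words, one never upgrades ``almost fixed'' to ``fixed'' on a single fixed space (which, as you note, is impossible by soft arguments); the upgrade happens at the level of the whole class $\C$ via ultralimits. Without this idea, or a genuinely new one making your tensoring step work, your argument does not close.
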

\begin{proof}
Let $\pi$ a norm-preserving representation of $ \tilde{G}$ on some Banach space in $\C$, and let $b\in Z^1( \tilde{G},\pi)$. By Theorem \ref{thm:decomposition}, 
only two cases need to be considered: the case where $\pi( \tilde{G})=\{id\}$, and the case where it does not have nonzero invariant vectors. In the first case, 
$b$ is a morphism, and in particular factors through $G$: it is therefore trivial.

We can therefore assume that $\pi( \tilde{G})$ does not have nonzero invariant vectors. Again we can split the problem into two cases: 
either $\pi(C)=\{id\}$, or $\pi(C)$ does not have nonzero invariant vectors.
In the first case, $\pi$ induces a representation $\overline{\pi}$ of the quotient $G$.
Since $b(c)$ is $\pi( \tilde{G})$-invariant for all $c\in C$, $b$ is identically zero in restriction to $C$. It therefore factors through a cocycle in 
$Z^1(G,\overline{\pi})$, which is a coboundary by our assumption on $G$. It follows that $b$ itself is a coboundary.

In the second case, we deduce from \cite[Theorem 2]{BRS} (see Proposition \ref{prop:BRS}) that every continuous norm-preserving representation of $ \tilde{G}$
has trivial first reduced cohomology. We conclude (thanks to the following theorem of Gromov \cite{GrRW}): if a group admits an affine isometric action
on a Banach space without fixed points, then it admits an affine isometric action on some ultralimit of this Banach space without almost invariant points.
\end{proof}


\end{document}